\theoremstyle{plain}
\newtheorem{thm}{Theorem}[section]
\newtheorem{lem}[thm]{Lemma}
\newtheorem{prop}[thm]{Proposition}
\newtheorem{cor}[thm]{Corollary}
\theoremstyle{definition}
\newtheorem{dfn}[thm]{Definition}
\newtheorem{ex}[thm]{Example}
\theoremstyle{remark}
\newtheorem{rmk}[thm]{Remark}
\newcommand{\cF}{\mathcal{F}}
\newcommand{\cL}{\mathcal{L}}
\newcommand{\cM}{\mathcal{M}}
\newcommand{\cN}{\mathcal{N}}
\newcommand{\cO}{\mathcal{O}}
\newcommand{\cS}{\mathcal{S}}
\newcommand{\cU}{\mathcal{U}}
\newcommand{\cV}{\mathcal{V}}
\newcommand{\cW}{\mathcal{W}}
\newcommand{\frakh}{\mathfrak{h}}
\DeclareMathOperator{\uhp}{\mathcal{H}}
\DeclareMathOperator{\rk}{rk}
\DeclareMathOperator{\Tr}{Tr}
\DeclareMathOperator{\Hom}{Hom}
\DeclareMathOperator{\Spec}{Spec}
\DeclareMathOperator{\Pic}{Pic}
\DeclareMathOperator{\GL}{GL}
\DeclareMathOperator{\SL}{SL}
\DeclareMathOperator{\id}{id}
\DeclareMathOperator{\diag}{diag}
\newcommand*{\df}{\mathrel{\vcenter{\baselineskip0.5ex \lineskiplimit0pt
                     \hbox{\scriptsize.}\hbox{\scriptsize.}}} =}
\providecommand{\twomat}[4]{\left(\begin{matrix}#1&#2\\#3&#4\end{matrix}\right)}
\providecommand{\stwomat}[4]{\left(\begin{smallmatrix}#1&#2\\#3&#4\end{smallmatrix}\right)}
\newcommand{\CC}{\mathbf{C}}
\newcommand{\ZZ}{\mathbf{Z}}
\newcommand{\PP}{\mathbf{P}}
\newcommand{\RR}{\mathbf{R}}
\newcommand{\DD}{\mathbf{D}}
\newcommand{\cMbar}{\overline{\cM}}
\newcommand{\cLbar}{\overline{\cL}}
\newcommand{\cVbar}{\overline{\cV}}
\newcommand{\cObar}{\cO}
\newcommand{\cSbar}{\overline{\cS}}
\newcommand{\dbls}{/ \hspace{-1.5mm}/\hspace{-.5mm}}
\newcommand{\orb}[2]{#1\backslash\!\!\backslash #2}
\begin{document}
\title{vector valued modular forms and the modular orbifold of elliptic curves}
\author{Luca Candelori and Cameron Franc}
\date{}

\begin{abstract}
This paper presents the theory of holomorphic vector valued modular forms from a geometric perspective. More precisely, we define certain holomorphic vector bundles on the modular orbifold of generalized elliptic curves whose sections are vector valued modular forms. This perspective simplifies the theory, and it clarifies the role that exponents of representations of $\SL_2(\ZZ)$ play in the holomorphic theory of vector valued modular forms. Further, it allows one to use standard techniques in algebraic geometry to deduce free-module theorems and dimension formulae (deduced previously by other authors using different techniques), by identifying the modular orbifold with the weighted projective line $\PP(4,6)$. 
\end{abstract}
\maketitle

\tableofcontents
\setcounter{tocdepth}{1}

\section{Introduction}

Vector valued modular forms have played a role in number theory \cite{Borcherds}, \cite{EichlerZagier}, \cite{Selberg}, along with areas in mathematical physics, for some time now. A systematic treatment of their theory has only been initiated in recent years by Bantay, Gannon  \cite{Bantay},  \cite{BantayGannon}, \cite{Gannon}, Knopp, Mason \cite{KnoppMason2}, \cite{Mason1}, \cite{Mason2} and others \cite{Marks}, \cite{MarksMason}, \cite{SaberSebbar}. Most of these approaches are based upon the Riemann-Hilbert correspondence, or vector valued Poincar\'{e} series. In this paper we present a new geometric perspective on the subject by viewing vector valued modular forms as sections of certain vector bundles over the modular orbifold of generalized elliptic curves\footnote{The paper \cite{SaberSebbar} also defines vector valued modular forms as sections of certain vector bundles on Riemann surfaces. We discuss how their work compares with ours below.}.

This geometric perspective was advocated by Gannon \cite{Gannon}. In particular, Gannon notes that there are results in the theory of vector valued modular forms that should follow from suitably generalized versions of the Birkhoff-Grothendieck Theorem, Riemann-Roch and Serre Duality (\cite{Gannon}, \S 3.3 and \S 3.5). In this paper we make these connections entirely rigorous by viewing the modular orbifold as the weighted projective line $\PP(4,6)$, for which analogs of the Birkhoff-Grothendieck Theorem (Theorem \ref{thm:DecompositionTheorem}, due to Meier and Vistoli \cite{Meier}), Riemann-Roch (Theorem \ref{thm:eulerFormula}, due to Edidin \cite{Edidin})  and Serre Duality (Proposition \ref{prop:weakSerreDuality} of this paper), are well-known.  

It is worth noting that many of the results presented below have been obtained by Gannon \cite{Gannon} using a different approach, and in a slightly more general context of admissible multiplier systems of weight $w\in\CC$. His approach builds on work of Borcherds \cite{Borcherds} and joint work between Bantay and Gannon \cite{BantayGannon}, and it makes essential use of the solution to the Riemann-Hilbert problem. In this paper, we restrict to the case when $w\in\ZZ$. However, it is entirely plausible that our methods can be applied to the study of more general admissible multiplier systems by replacing the modular orbifold with more general stacks, e.g. the stack $\cM_{1,\vec{1}}$ of \cite{Hain}, \S 8, whose fundamental group is the braid group on three strings. It is also worth noting that Gannon's results are often stated under the assumption that $\rho(T)$ (see below for notation) is diagonalizable. We do not need this assumption in this paper, since our methods do not require explicit computations with $q$-expansions (or $\log q$ expansions), which can be prohibitive when $\rho(T)$ is not diagonalizable.
 
In Section \ref{s:vvmfs} of this paper we define the vector bundles  $\cVbar_{k,L}(\rho)$ of vector valued modular forms of weight $k\in \ZZ$ for a finite-dimensional representation $\rho$ of $\SL_2(\ZZ)$ and choice of exponents $L$ (Definition \ref{d:standardexponents} below). The global sections of these vector bundles are precisely the spaces $\cM^\lambda_w(\rho)$ of \cite{Gannon}, \S 3.4 with $w=k$ and $\lambda=L$. With respect to these definitions, the main contribution of this paper, aside from the intrinsic interest of the modular perspective, is to clarify the role that the choice of exponents $L$ plays in the theory. For example, we explain how the choice of exponents relative to an interval $[0,1)$ yields holomorphic vector valued modular forms in the classical sense. The interval $(0,1]$ yields cusp forms, and intervals of the form $[\frac{a}{12},\frac{a}{12}+1)$ yield subspaces of holomorphic forms that are divisible by $\eta^{2a}$, where $\eta$ is the Dedekind eta function.

As an immediate application of this geometric definition of vector valued modular forms, we recover the well-known (e.g. \cite{Gannon}, \cite{MarksMason}) free-module theorem for vector valued modular forms (Theorem \ref{thm:FreeModuleTheorem} below). Our statement mirrors that of \cite{Gannon}, Theorem 3.4, without restrictions on $\rho(T)$, but with restrictions on the weight being integral. We also obtain simple formulas for the Euler characteristic of the vector bundles $\cVbar_{k,L}(\rho)$ (Corollary \ref{c:eulerchar}). In most cases, this formula is enough to also deduce dimension formulas for the vector spaces of holomorphic vector valued modular forms and cusp forms (Theorem \ref{t:dimension}). These results are obtained by identifying the compactified modular orbifold with the weighted projective line $\PP(4,6)$, thus viewing the vector bundles  $\cVbar_{w,\lambda}(\rho)$ as purely algebraic objects. We may then apply the above-mentioned results of Meier \cite{Meier} on weighted projective lines, and the Riemann-Roch theorem for $\PP(4,6)$ \cite{Edidin}. 

As it turns out, the isomorphism class of $\cVbar_{k,L}(\rho)$ is entirely determined by a $n$-tuple of integers, $n=\dim \rho$, which we call the {\em roots} of $\rho$ (Definition \ref{dfn:roots}). These are the negative of the `generating weights' of \cite{Gannon}. We devote Section \ref{s:examples} to computing these roots in a variety of examples.

Several other authors have discussed dimension formulae for spaces of vector valued modular forms. Most of these \cite{Bantay},\cite{Borcherds}, \cite{Freitag}, \cite{Skoruppa} restrict to representations of finite image. In \cite{Skoruppa} this restriction arises due to an application of a trace formula, while in \cite{Borcherds} and \cite{Freitag} this finiteness condition allows the authors to work on a finite cover of the modular orbifold that is in fact a scheme. In \cite{BantayGannon}, the authors assume $\rho(T)$ is of finite order, while Gannon (\cite{Gannon}, Lemma 3.2)  avoids imposing any  finiteness condition via an application of the solution to the Riemann-Hilbert problem, but assumes $\rho(T)$ diagonal. The present paper avoids imposing any finiteness or diagonalizability conditions via a technique modeled after the construction of extensions of a regular connection on a punctured sphere -- see \cite{Deligne}, \cite{PetersSteenbrink} and Proposition \ref{prop:extensionOfVValuedModForms} of the present paper.

The paper \cite{SaberSebbar} also describes vector valued modular forms as sections of vector bundles on noncompact Riemann surfaces, with conditions imposed at elliptic points and cusps. The authors of \cite{SaberSebbar} prove the existence of vector valued modular forms in great generality for arbitrary Fuchsian groups, and without the aid of stacks (which is why they must privilege the elliptic points). Their construction is modeled after the extension of a regular connection on a noncompact curve, as is ours. Due to the great generality of the paper, \cite{SaberSebbar} necessarily focuses on important basic questions such as the existence of modular forms. It does not touch on topics such as dimension formulae or free-module theorems\footnote{For general Fuchsian groups one might expect at best a projective-module theorem.}, and it lacks a moduli perspective. In contrast, in limiting ourselves to $\SL_2(\ZZ)$, the scope of our paper is narrower than \cite{SaberSebbar}, but we are able to go more deeply into the subject.

Let us finally note that our definitions are complex analytic, and we rely on a GAGA result for Deligne-Mumford stacks \cite{Toen} to inject results from algebraic geometry into the discussion. It would be of interest to provide a purely algebro-geometric construction of vector valued modular forms for as broad a class of representations as possible (for example, at least for representations of finite image), while working over an integral base such as $\ZZ[1/M]$. Such a perspective would lend insight into arithmetic questions about noncongruence modular forms, such as questions of unbounded denominators \cite{AtkinSwinnertonDyer}.

The following notation is used throughout the present paper: set
\begin{align*}
  T &= \twomat 1101, & S &= \twomat{0}{-1}{1}{0}, & R &= \twomat{0}{-1}{1}{1}.
\end{align*}
The function $\eta$ denotes the Dedekind eta function, and $\chi$ is the character of $\SL_2(\ZZ)$ corresponding to $\eta^2$. Thus $\chi(T) = e^{2\pi i \frac{1}{12}}$, $\chi(S) = -i$ and $\chi(R) = e^{2\pi i\frac{5}{6}}$.\footnote{Formula (12) of \cite{FrancMason} incorrectly reads $\chi(S) = i$, but this does not affect the results of that paper.} The notation $\chi(\cV)$, where $\cV$ is a vector bundle, will also be used to denote the Euler characteristic of $\cV$, but no confusion between the two notations should arise. Throughout this paper $\rho$ will denote a finite-dimensional complex representation of $\SL_2(\ZZ)$, and $\rho^\vee$ denotes the dual representation. If $\cV$ is a vector bundle, then $\cV^\vee$ denotes the dual vector bundle. If $H^i(X,\cF)$ denotes the cohomology of a sheaf $\cF$ on some space $X$, then $h^i(X,\cF)$ denotes the dimension of $H^i(X,\cF)$
whenever this makes sense.

The authors thank Dan Edidin, Terry Gannon, Geoffrey Mason and Lennart Meier for several helpful discussions. We also thank the referee for comments and corrections. This project began through collaboration at the mini-workshop on \emph{Algebraic Varieties, Hypergeometric series and Modular Forms} held at LSU in April 2015. The authors would like to acknowledge the organizers and the sponsors, Microsoft Research, the Number Theory Foundation, and the LSU Office of Research and Economic Development, for their support. 

\section{The modular orbifold of elliptic curves}
\label{section:modularForms}

Let $\frakh\df\{ z\in \CC : \mathrm{Im}(z)>0\}$ denote the complex upper half-plane and let
$$
\cM^{\rm an} \df \orb{\SL_2(\ZZ)}{\frakh}
$$
denote the {\em modular orbifold}, obtained by taking the quotient (in the category of orbifolds) of the action of $\SL_2(\ZZ)$ on $\frakh$ by linear fractional transformations. A detailed description of this orbifold can be found in \cite{Hain}, which is also our main reference for this section. For each integer $k\in \ZZ$, there is an action of $\SL_2(\ZZ)$ on $\CC\times \frakh$ defined as 
\begin{equation}
\label{eqn:modularFormsLineBundle}
  \twomat abcd (z,\tau) = \left((c\tau + d)^kz,\frac{a\tau + b}{c\tau + d}\right).
\end{equation}
The orbifold quotient $\orb{\SL_2(\ZZ)}{\CC\times\frakh}$ by this action defines a line bundle $\cL_k$ on $\cM^{\rm an}$, whose holomorphic sections are holomorphic functions $f:\frakh \rightarrow \CC$ satisfying
\begin{equation}
\label{eqn:modularFormsFormula}
f\left ( \frac{a\tau + b}{c\tau + d} \right) = (c\tau +d)^kf(\tau), \text{ for all } \twomat abcd \in \SL_2(\ZZ).
\end{equation}
That is, they are (level one, weakly holomorphic) modular forms of weight $k$.  

The orbifold $\cM^{\rm an}$ admits a canonical compactification  $\cMbar^{\rm an}$, which can be constructed as follows \cite{Hain}: consider the quotient $\orb{\langle -I_2, T \rangle}{\frakh}$, where $I_2$ is the identity matrix. This quotient is a covering
$$
\iota_1: \orb{\langle -I_2, T \rangle}{\frakh} \longrightarrow  \orb{\SL_2(\ZZ)}{\frakh} = \cM^{\rm an}
$$
of the modular orbifold $\cM^{\rm an}$. The subgroup $ \langle -I_2, T \rangle$ is isomorphic to $C_2\times \ZZ$ via $(\pm 1, n) \mapsto \pm T^n$, and it acts on $\frakh$ by $(\pm 1, n)\tau = \tau + n$. Consequently, the exponential map $\tau \mapsto q=e^{2\pi i\tau}$ defines an isomorphism of orbifolds $\orb{\langle -I_2, T \rangle}{\frakh} \cong \orb{C_2}{\DD^\times}$, where $\DD^\times$ is the punctured unit disk, and $C_2$ acts trivially on it.  We thus have a diagram 
\begin{equation}
\label{diagram:compactification}
\xymatrixcolsep{4pc}\xymatrix{
&\orb{\langle -I_2, T \rangle}{\frakh}\ar[ld]_{\iota_1}\ar@{<->}[r]^{\tau \mapsto e^{2\pi i\tau}} & \orb{C_2}{\DD^\times}\ar[rd]^{\iota_2} &\\
\orb{\SL_2(\ZZ)}{\frakh} = \cM^{\rm an} &&& \orb{C_2}{\DD},}
\end{equation}
where $\iota_2: \orb{C_2}{\DD^{\times}} \hookrightarrow \orb{C_2}{\DD}$ is induced by the canonical inclusion $\DD^{\times}\hookrightarrow \DD$. 

\begin{dfn}
The {\em compactified modular orbifold}, denoted $\cMbar^{\rm an}$, is the orbifold obtained by glueing $\cM^{\rm an}$ and $\orb{C_2}{\DD}$ along the maps $\iota_1$ and $\iota_2$ of diagram \eqref{diagram:compactification}.
\end{dfn}

The orbifold $\cMbar^{\rm an}$ can be thought of as being obtained from $\cM^{\rm an}$ by adding an orbifold point $\infty$ with automorphism group equal to $C_2$, corresponding to the origin of $\orb{C_2}{\DD}$. The point $\infty$ is called the \emph{cusp} of $\cMbar^{\rm an}$.

By descent for line bundles over orbifolds, a line bundle $\cN$ over $\cMbar^{\rm an}$ can be specified by giving a triple $(\cN_1,\cN_2,\phi)$ of a line bundle $\cN_1$ over $\cM^{\rm an}$, a line bundle $\cN_2$ over $\orb{C_2}{\DD}$, and a bundle isomorphism
$$
\phi: \iota_1^*\cN_1 \stackrel{\cong}\longrightarrow \iota_2^*\cN_2
$$  
lying over the map $\tau \mapsto e^{2\pi i\tau}$. The triple $(\cN_1,\cN_2,\phi)$ will be called an {\em extension} of $\cN_1$ to $\cMbar^{\rm an}$. There is a canonical extension of $\cL_k$, whose explicit construction we now recall since the same method will be applied in Section \ref{s:vvmfs} below. 

\begin{prop}[\cite{Hain}, Proposition 4.1]
\label{prop:canonicalExtensionOfModularForms}
The line bundle of modular forms $\cL_k$, defined by \eqref{eqn:modularFormsLineBundle}, has a canonical extension to $\cMbar^{\rm an}$, denoted by $\cLbar_k$, such that there is a canonical isomorphism $\cLbar_k \cong \cLbar_1^{\otimes k}$ for any integer $k$. 
\end{prop}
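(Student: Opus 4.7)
The plan is to construct $\cLbar_k$ directly as an extension datum, i.e.\ as a triple $(\cL_k,\cN_2,\phi)$ in the sense described just before the proposition, and then to check that the desired isomorphism is obtained componentwise.

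First I would unwind $\iota_1^*\cL_k$ explicitly. Restricting the $\SL_2(\ZZ)$-action on $\CC\times\frakh$ to the subgroup $\langle -I_2,T\rangle$, note that for $-I_2 = \twomat{-1}{0}{0}{-1}$ and $T=\twomat{1}{1}{0}{1}$ the cocycle $(c\tau+d)^k$ is $(-1)^k$ and $1$ respectively. Hence the action of $(\pm I_2)T^n$ on the fiber is scaling by $(\pm 1)^k$, and the action on the base is $\tau\mapsto \tau+n$. Under the isomorphism $\tau\mapsto q=e^{2\pi i\tau}$ in diagram \eqref{diagram:compactification}, the $T$-translations on $\frakh$ become trivial on $\DD^\times$, so $\iota_1^*\cL_k$ is presented as the $C_2$-equivariant trivial line bundle on $\DD^\times$ with $-1$ acting on the fiber $\CC$ by $(-1)^k$.

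Next I would \emph{define} $\cN_2$ on $\orb{C_2}{\DD}$ to be the trivial line bundle $\CC\times\DD$ equipped with exactly the same $C_2$-action, i.e.\ $-1\cdot(z,q)=((-1)^k z,q)$. Since $\iota_2:\DD^\times\hookrightarrow\DD$ is $C_2$-equivariant, $\iota_2^*\cN_2$ is by construction the same $C_2$-equivariant line bundle on $\DD^\times$ as $\iota_1^*\cL_k$, so we may take $\phi$ to be the identity. The resulting triple defines the line bundle $\cLbar_k$ on $\cMbar^{\rm an}$, and the construction is manifestly functorial in $k$; canonicity can be phrased as the statement that this is the unique extension whose local sections near $\infty$ are exactly the holomorphic ($C_2$-equivariant) functions in the $q$-variable, i.e.\ the forms with honest $q$-expansions at the cusp.

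Finally I would verify $\cLbar_k\cong \cLbar_1^{\otimes k}$ component-by-component. On $\cM^{\rm an}$, the cocycle identity $(c\tau+d)^k=((c\tau+d))^k$ gives a tautological isomorphism $\cL_k\cong\cL_1^{\otimes k}$. On $\orb{C_2}{\DD}$, the $C_2$-action on the $\cN_2$ built for weight $k$ is $(-1)^k$, which equals the $k$-th tensor power of the action $-1$ used to build $\cN_2$ for weight $1$; hence the fiber actions of the two candidates agree, and the two trivializations above $\DD^\times$ are tautologically compatible, so the isomorphisms glue. The only point that requires genuine care is the compatibility of the cocycle-based identification on $\cM^{\rm an}$ with the identity gluing datum on the disk chart; this is the main (minor) obstacle, and it is handled by tracing a section of $\cL_k$ through $\tau\mapsto q$ and checking that taking the $k$-th power of a section of $\cL_1$ on $\frakh$ corresponds, under $\phi$, to taking the $k$-th power in the fiber of $\cN_2$ on $\DD$.
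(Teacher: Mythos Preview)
Your proposal is correct and follows essentially the same approach as the paper: the paper also defines $\cN_2$ as the trivial line bundle on $\orb{C_2}{\DD}$ with $C_2$ acting on the fiber by $(-1)^k$, and takes $\phi(z,\tau)=(z,e^{2\pi i\tau})$, which is exactly your ``identity'' map after absorbing the base change $\tau\mapsto q$ into the description of $\iota_1^*\cL_k$. Your treatment of the tensor-product compatibility is slightly more explicit than the paper's (which simply says it ``follows easily''), but the argument is the same.
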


\begin{proof}
Let $\cN_2$ be the line bundle over $\orb{C_2}{\DD}$ given by the quotient $\orb{C_2}{\CC\times\DD}$ by the action $(\pm 1)(z,q) = \left((\pm 1)^kz,q\right)$. Then $\iota_2^*\cN_2$ is simply the quotient $\orb{C_2}{\CC\times\DD^{\times}}$ by the same action. On the other hand, if we let $\cN_1 = \cL_k$ be the line bundle  $\orb{\SL_2(\ZZ)}{\CC\times\frakh}$ given by \eqref{eqn:modularFormsLineBundle}, then $\iota_1^*\cL_k$ is the quotient $\orb{\langle \pm I_2, T\rangle }{\CC\times\frakh}$, where the action is the same as \eqref{eqn:modularFormsLineBundle}, but restricted to $\langle \pm I_2, T\rangle$. Finally, let $\phi$ be the map
\begin{align*}
\CC\times\frakh &\longrightarrow \CC\times\DD^{\times} \\
(z,\tau) &\longmapsto (z, e^{2\pi i \tau}).
\end{align*}
Then
$$
\phi( (\pm T^n)(z,\tau)) = \phi( (\pm 1)^kz,\tau+n) = ((\pm 1)^kz,e^{2\pi i\tau}) = (\pm 1)(z,e^{2\pi i\tau}) = (\pm 1)\phi(z,\tau),
$$
and thus $\phi$ gives a bundle map
$$
\orb{\langle \pm I_2, T\rangle }{\CC\times\frakh} \longrightarrow \orb{C_2}{\CC\times\DD^{\times}} 
$$
lying over $\tau \mapsto e^{2\pi i\tau}$. The canonical extension of $\cL_k$ is then given by the triple $(\cN_1=\cL_k,\cN_2,\phi)$. The statement about the compatibility with tensor products follows easily. 
\end{proof}

By \eqref{eqn:modularFormsFormula}, it is easy to see that $\cLbar_k$ has no global sections for odd integers $k$. Suppose then $k$ is even and let $f$ be a global section of $\cLbar_k$. We can restrict $f$ to $\cM^{\rm an}$ and then to $\orb{C_2}{\DD^{\times}}$, the punctured neighborhood of $\infty$, where we have
$
\iota_1^*f = \iota_2^*\widetilde{f}
$
for some section $\widetilde{f}$ of the line bundle $\cN_2$ of Proposition \ref{prop:canonicalExtensionOfModularForms}. But $\cN_2$ in this case is trivial, since $k$ is even, and thus  
$$
\widetilde{f} \in H^0(\orb{C_2}{\DD}, \cN_2) = H^0(\orb{C_2}{\DD}, \cO) = \CC[\![q]\!].
$$
In other words, the {\em $q$-expansion} $\widetilde{f}$ of $f$ contains only non-negative powers of $q$, that is, a global section $f$ of $\cLbar_k$ is a {\em holomorphic modular form} of weight $k$ and level one.

\section{Vector valued modular forms}
\label{s:vvmfs}
Let $\rho \colon \SL_2(\ZZ) \to \GL(V)$ denote a finite-dimensional complex representation of $\SL_2(\ZZ)$. Vector valued modular forms of weight $k$ for $\rho$ are holomorphic functions $F \colon \frakh \to V$ satisfying both the condition 
\begin{equation}
\label{eq:translaw}
  F(\gamma \tau) = (c\tau + d)^k\rho(\gamma)F(\tau), \text{ for all } \gamma = \stwomat abcd \in \SL_2(\ZZ),
\end{equation}
as well as a holomorphy condition at the cusp. Such functions were introduced as early as the 1950s (see \cite{Selberg}, for example), but their general study awaited the relatively recent work of Knopp and Mason \cite{KnoppMason1}, \cite{KnoppMason2}. This section describes the basic theory of vector valued modular forms in a basis-independent and geometric way, similar to the  description of holomorphic modular forms in Section \ref{section:modularForms}.

Let  $\cV_k(\rho) \df \orb{\SL_2(\ZZ)}{V\times\frakh}$ be the quotient of $V\times \frakh$ by the action 
\begin{equation}
\label{eqn:vvaluedAction}
\gamma (v,\tau) = \left((c\tau + d)^k\,\rho(\gamma)v,\frac{a\tau + b}{c\tau + d}\right) \text{ for all } \gamma = \twomat abcd \in \SL_2(\ZZ).
\end{equation}
The quotient $\cV_k(\rho)$ is a vector bundle over $\cM^{\rm an}$. When the representation $\rho$ is understood, we will often write $\cV_k$ in place of $\cV_k(\rho)$. Global holomorphic sections of $\cV_k \to \cM$ are holomorphic $V$-valued functions $F\colon \frakh \to V$ that transform as in (\ref{eq:translaw}).

In order to impose a holomorphy condition at the cusp on vector valued modular forms, one can extend $\cV_k$ to the compactified modular orbifold $\cMbar^{\rm an}$ as follows: using descent for vector bundles over orbifolds, the diagram \eqref{diagram:compactification} shows that an extension of $\cV_k$ to $\cMbar^{\rm an}$ is nothing but a triple $(\cW_1,\cW_2,\phi)$, where $\cW_1 = \cV_k$, $\cW_2$ is a vector bundle over $\orb{C_2}{\DD}$, and $\phi$ is a bundle isomorphism
$$
\phi: \iota_1^*\cW_1 \stackrel{\cong}\longrightarrow\iota_2^*\cW_2
$$
lying over $\tau \mapsto e^{2\pi i\tau}$. One can construct such extensions by using {\em exponent matrices}, defined as follows:

\begin{dfn}
Let $\rho:\SL_2(\ZZ)\rightarrow \GL(V)$ be a finite-dimensional representation. An endomorphism $L$ of $V$ is called an \emph{exponent matrix} for $\rho$ if $\rho(T) = e^{2\pi i L}$.
\end{dfn}

The following proposition is modelled after the canonical extension of a regular connection on an open curve, as discussed in \cite{Deligne}, \cite{PetersSteenbrink}, et cetera. See also \cite{BantayGannon}, \cite{Bantay}.

\begin{prop}
\label{prop:extensionOfVValuedModForms}
To each exponent matrix $L$ for $\rho$, there corresponds a unique extension of $\cV_k(\rho)$ to $\cMbar^{\rm an}$, denoted by $\cVbar_{k,L}(\rho)$.  
\end{prop}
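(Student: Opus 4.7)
The plan is to extend the method used in the proof of Proposition \ref{prop:canonicalExtensionOfModularForms} from line bundles to higher-rank vector bundles, using the exponent matrix $L$ to construct a holomorphic trivialization near the cusp that absorbs the monodromy of $\rho(T)$. By descent for vector bundles on $\cMbar^{\rm an}$ and diagram \eqref{diagram:compactification}, it suffices to produce a triple $(\cV_k(\rho), \cW_2, \phi)$ consisting of a vector bundle $\cW_2$ on $\orb{C_2}{\DD}$ together with a bundle isomorphism $\phi \colon \iota_1^* \cV_k(\rho) \stackrel{\cong}{\longrightarrow} \iota_2^* \cW_2$ lying over $\tau \mapsto e^{2\pi i \tau}$.

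For the construction, I take $\cW_2 \df \orb{C_2}{V \times \DD}$, where the generator of $C_2$ acts by $(v, q) \mapsto ((-1)^k \rho(-I_2) v,\, q)$; this is a well-defined action since $\rho(-I_2)^2 = 1$. The gluing isomorphism $\phi$ I define via its lift
\[
\widetilde{\phi} \colon V \times \frakh \longrightarrow V \times \DD^{\times}, \qquad \widetilde{\phi}(v, \tau) = \bigl(e^{-2\pi i L \tau} v,\; e^{2\pi i \tau}\bigr).
\]
The $T$-equivariance of $\widetilde{\phi}$ is a direct computation using the defining identity $e^{-2\pi i L} = \rho(T)^{-1}$ to cancel the monodromy, precisely mirroring the line bundle case in Proposition \ref{prop:canonicalExtensionOfModularForms}. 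The $-I_2$-equivariance reduces to the commutation $[L, \rho(-I_2)] = 0$, which holds whenever $L$ is chosen within the algebra generated by $\rho(T)$ (for instance, via the functional-calculus logarithm applied to the Jordan--Chevalley decomposition of $\rho(T)$), since $\rho(-I_2)$ is central in $\rho(\SL_2(\ZZ))$ and in particular commutes with $\rho(T)$.

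For uniqueness, suppose $(\cW_2', \phi')$ is another extension arising from the same $L$. Then $\phi' \circ \phi^{-1}$ pulls back through the cover to a $\GL(V)$-valued holomorphic function on $\DD^{\times}$ expressed in the canonical $q^L$-trivializations; by the construction of each extension, this transition matrix extends holomorphically and invertibly across $q = 0$, producing a canonical $C_2$-equivariant isomorphism $\cW_2 \cong \cW_2'$ compatible with $\phi$ and $\phi'$. The main technical obstacle I anticipate is verifying the $C_2$-equivariance of $\widetilde{\phi}$, which is the sole step that relies on the commutation $[L, \rho(-I_2)] = 0$ rather than merely the bare defining identity $e^{2\pi i L} = \rho(T)$; every other step is a direct adaptation of the line bundle argument of Proposition \ref{prop:canonicalExtensionOfModularForms}.
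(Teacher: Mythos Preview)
Your construction is exactly the paper's: the same bundle $\cW_2=\orb{C_2}{V\times\DD}$ with the action $(\pm 1)(v,q)=((\pm 1)^k\rho(\pm I_2)v,q)$, and the same gluing map $\phi_L(v,\tau)=(e^{-2\pi i\tau L}v,e^{2\pi i\tau})$. The paper simply asserts ``one verifies easily that $\phi_L((\pm T^n)(v,\tau))=(\pm 1)\phi_L(v,\tau)$'' and stops there.

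You are right to isolate the $-I_2$-equivariance as the one nontrivial point: equality of $\phi_L((-I_2)(v,\tau))$ and $(-1)\cdot\phi_L(v,\tau)$ unwinds to $e^{-2\pi i\tau L}\rho(-I_2)=\rho(-I_2)e^{-2\pi i\tau L}$ for all $\tau$, i.e.\ $[L,\rho(-I_2)]=0$, and this does \emph{not} follow from the bare identity $e^{2\pi iL}=\rho(T)$. The paper glosses over this. However, your proposed remedy---restricting $L$ to the subalgebra generated by $\rho(T)$---alters the hypothesis of the proposition and is stronger than necessary. The resolution implicit in the paper's later usage (see the proof that two exponents relative to the same interval give isomorphic extensions, and Corollary~\ref{c:eulerchar}) is to split $\rho=\rho^+\oplus\rho^-$ into even and odd parts and take $L=L^+\oplus L^-$ adapted to that splitting; then $\rho(-I_2)$ is scalar on each summand and the commutation is automatic. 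Any exponent matrix can be replaced by one of this block form without changing the extension up to isomorphism.

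Your uniqueness paragraph goes beyond what the paper proves. In the statement, ``unique'' means only that the triple $(\cV_k(\rho),\cW_2,\phi_L)$ is canonically specified by $L$; the paper does not argue uniqueness among all extensions of $\cV_k(\rho)$, and indeed different $L$ generally give non-isomorphic extensions (cf.\ Theorem~\ref{thm:characters}).
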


\begin{proof}
We proceed as in Proposition \ref{prop:canonicalExtensionOfModularForms}. Let $\cW_2$ be the vector bundle over $\orb{C_2}{\DD}$ given by the quotient $\orb{C_2}{V\times\DD}$ by the action $(\pm 1)(v,q) = \left((\pm 1)^k\rho(\pm I_2)v,q\right)$. Then $\iota_2^*\cW_2$ is simply the quotient $\orb{C_2}{V\times\DD^{\times}}$ by the same action. 

Next let $\cW_1 \df \cV_k(\rho)$, so that $\iota_1^*\cW_1$ is the quotient $\orb{\langle \pm I_2, T\rangle }{V\times\frakh}$, where the action is given by equation \eqref{eqn:vvaluedAction}, restricted to $\langle \pm I_2, T\rangle$. Finally, let $\phi_L$ be the map
\begin{align*}
\phi_L: V\times\frakh &\longrightarrow V\times\DD^{\times} \\
(v,\tau) &\longmapsto (e^{-2\pi i\tau L}\,v, e^{2\pi i \tau}).
\end{align*}
One verifies easily that $\phi_L((\pm T^n)(v,\tau)) = (\pm 1)\phi_L(v,\tau)$, and thus $\phi_L$ gives a bundle isomorphism
$$
\orb{\langle \pm I_2, T\rangle }{V\times\frakh} \stackrel{\cong}\longrightarrow \orb{C_2}{V\times\DD^{\times}}
$$
lying over $\tau\mapsto e^{2\pi i\tau}$. Thus we may let $\cVbar_{k,L}(\rho)$ be the vector bundle over $\cMbar^{\rm an}$ defined by the triple $(\cW_1,\cW_2,\rho_L)$. 
\end{proof}

Proposition \ref{prop:extensionOfVValuedModForms} raises the question of when two extensions of $\cV_k$ to $\cMbar^{\rm an}$ are isomorphic. Again by descent, an isomorphism of two vector bundles $\cU,\cW$ over $\cMbar^{\rm an}$ corresponding to triples $(\cU_1, \cU_2, \psi)$, $(\cW_1,\cW_2,\phi)$ is given by a pair of isomorphisms
$$
\alpha_1: \cU_1\cong \cW_1, \quad \alpha_2: \cU_2\cong \cW_2
$$
over $\cM^{\rm an}$ and $\orb{C_2}{\DD}$, respectively, such that the following diagram is commutative:
\[
\xymatrixcolsep{4pc}\xymatrix{
\iota_1^*\cU_1\ar[r]^{\psi}\ar[d]_{\alpha_1} & \iota_2^*\cU_2\ar[d]^{\alpha_2}\\
\iota_1^*\cW_1\ar[r]^{\phi}&\iota_2^*\cW_2. }
\]

As a first example, we show that extending $\cV_k$ as in Proposition \ref{prop:extensionOfVValuedModForms} is canonically equivalent to extending $\cV_0$ and tensoring with $\cLbar_k$, provided that the choice of exponent matrix $L$ is the same for $\cV_k$ and $\cV_0$.

\begin{prop}
\label{prop:compatibilityOfExtensions}
Let $L$ denote an exponent matrix for $\rho$. Then there is a canonical isomorphism $\cVbar_{k,L}(\rho)\cong \cVbar_{0,L}(\rho)\otimes\cLbar_k$ of vector bundles over $\cM^{\rm an}$. 
\end{prop}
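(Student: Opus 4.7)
The plan is to unwind both sides as descent triples, construct the required isomorphism ``chart by chart,'' and then check that these chart-wise isomorphisms are compatible with the clutching maps defining the extensions. No deep content is involved; the main work is bookkeeping, and the essential point is that the scalar factors $(c\tau+d)^k$ and $(\pm 1)^k$ commute with the $\rho$-action and can therefore be separated off as a tensor factor of $\cL_k$-type.

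First I would write both sides as triples over the diagram \eqref{diagram:compactification}. By the construction in Proposition \ref{prop:extensionOfVValuedModForms}, $\cVbar_{k,L}(\rho)$ corresponds to the triple $(\cV_k(\rho),\cW_2,\phi_L)$ with $\cW_2=\orb{C_2}{V\times\DD}$ under $(\pm 1)(v,q)=((\pm 1)^k\rho(\pm I_2)v,q)$. Similarly, $\cVbar_{0,L}(\rho)\otimes\cLbar_k$ corresponds to the tensor of the triples for $\cVbar_{0,L}(\rho)$ and $\cLbar_k$ (using Proposition \ref{prop:canonicalExtensionOfModularForms}); its first component is $\cV_0(\rho)\otimes\cL_k$ over $\cM^{\rm an}$, its second component is the tensor product of $\orb{C_2}{V\times\DD}$ (with the $\rho$-only action) and $\orb{C_2}{\CC\times\DD}$ (with the scalar action), and its clutching map is $\phi_L'\otimes\phi$, where $\phi_L'(v,\tau)=(e^{-2\pi i\tau L}v,e^{2\pi i\tau})$ and $\phi(z,\tau)=(z,e^{2\pi i\tau})$.

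Next I would construct the isomorphisms on each chart. On $\cM^{\rm an}$, the identity map $V\times\frakh\to (V\otimes\CC)\times\frakh$ is equivariant for the actions defining $\cV_k(\rho)$ and $\cV_0(\rho)\otimes\cL_k$, since
\[
  (c\tau+d)^k\rho(\gamma)v \;=\; \rho(\gamma)v\;\otimes\;(c\tau+d)^k\cdot 1,
\]
using that the scalar $(c\tau+d)^k$ commutes with $\rho(\gamma)$. This descends to an isomorphism $\alpha_1\colon \cV_k(\rho)\xrightarrow{\cong}\cV_0(\rho)\otimes\cL_k$. An entirely analogous computation on $\orb{C_2}{\DD}$, again using $(\pm 1)^k\rho(\pm I_2)v=\rho(\pm I_2)v\otimes(\pm 1)^k$, produces an isomorphism $\alpha_2$ between $\cW_2$ and the tensor of the two second components.

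Finally I would verify that $\alpha_1,\alpha_2$ fit into the commutative square characterizing an isomorphism of extensions: pulling back via $\iota_1$ and $\iota_2$ and using the canonical identification $V\otimes_\CC\CC=V$, one checks that $\phi_L$ and $\phi_L'\otimes\phi$ agree, since both send $(v,\tau)$ (respectively $(v,\tau)\otimes(1,\tau)$) to $(e^{-2\pi i\tau L}v,e^{2\pi i\tau})$ -- the clutching map $\phi$ of $\cLbar_k$ is trivial on the fiber coordinate, so tensoring with it contributes nothing beyond what $\phi_L'$ already supplies. The only step that requires genuine care is this last verification, since one must keep track of how the $C_2$-quotient on the $\orb{C_2}{\DD}$ side interacts with the tensor product and with the pullbacks $\iota_j^*$; but once the triples are written down explicitly this is a direct check. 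Because all choices made (identity on $V$, scalar $1\in\CC$) are canonical, the resulting isomorphism is canonical.
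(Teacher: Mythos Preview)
Your proposal is correct and follows essentially the same approach as the paper: both arguments write each side as a descent triple, take the identity map on each chart as the isomorphism, and then observe that the clutching maps coincide (in the paper's notation, $\psi = \phi\otimes\varphi = \phi_L$). Your version is simply more explicit about why the identity is equivariant and why the clutching maps agree, but the underlying argument is the same.
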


\begin{proof}
Let $(\cU_1, \cU_2, \psi)$ be the triple defining $\cVbar_{k,L}(\rho)$ as in the proof of Proposition \ref{prop:extensionOfVValuedModForms}, let  $(\cW_1,\cW_2,\phi)$ be the triple defining $\cVbar_{0,L}(\rho)$ and let $(\cN_1,\cN_2,\varphi)$ be the triple defining $\cLbar_k$ as in Proposition \ref{prop:canonicalExtensionOfModularForms}. Then the triple $(\cW_1\otimes\cN_1, \cW_2\otimes\cN_2,\phi\otimes\varphi)$ defines $\cVbar_{0,L}(\rho)\otimes\cLbar_k$. Now by \eqref{eqn:modularFormsLineBundle} and \eqref{eqn:vvaluedAction} the identity map gives an isomorphism 
$$
\id:\cU_1 = \cV_k(\rho)\stackrel{\cong}\longrightarrow \cW_1\otimes\cN_1 = \cV_0(\rho)\otimes\cL_k
$$
of vector bundles over $\cM$. Similarly, by looking at the proofs of Propositions \ref{prop:canonicalExtensionOfModularForms} and \ref{prop:extensionOfVValuedModForms} the identity gives an isomorphism 
$$
\id:\cU_2 \stackrel{\cong}\longrightarrow \cW_2\otimes\cN_2. 
$$
Finally, note that $\psi = \phi\otimes\varphi = \phi_L$, where $\phi_L$ is defined as in the proof of Proposition \ref{prop:extensionOfVValuedModForms}, and thus the pair $(\alpha_1 = \id, \alpha_2 = \id)$ is the required canonical isomorphism. 
\end{proof}

Next, we would like to compare the line bundles $\cVbar_{k,L}(\chi)$ obtained from characters $\chi:\SL_2(\ZZ)\rightarrow \CC^{\times}$ with the line bundles of modular forms $\cLbar_k$. This should indeed be possible, since it is a classical result (see theorem 6.9 of \cite{Hain}, for example) that $\Pic(\cMbar^{\rm an})\cong\ZZ$, where the class of $\cLbar_1$ generates $\Pic(\cMbar^{\rm an})$. 

Recall that the character group $\Hom(\SL_2(\ZZ),\CC^\times)$ is isomorphic with $\ZZ/12\ZZ$, with a generator given by the character $\chi$ of $\eta^2$. Since $\chi^a(T) = e^{2\pi i a/12}$ for $a  =0$, $\ldots$, $11$, extensions of $\cV_k(\chi^a)$ are determined by choices of exponent matrices $L=\frac{a}{12} + t$, for arbitrary $t\in\ZZ$. For these extensions we have:
\begin{thm}
\label{thm:characters}
Let $\cVbar_{k,L}(\chi^a)$ be the line bundle over $\cMbar^{\rm an}$ obtained from the character $\chi^a$ and the exponent matrix $L=a/12+t$, for some choice of $t\in\ZZ$. Then there is a canonical isomorphism
$$
\cVbar_{k,L}(\chi^a) \cong \cLbar_{k- a - 12t},
$$
for all $a=0,\ldots,11$ and all integers $k\in \ZZ$.   
\end{thm}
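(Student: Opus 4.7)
The plan is to first reduce to the case $k=0$ via Proposition~\ref{prop:compatibilityOfExtensions}, which gives $\cVbar_{k,L}(\chi^a) \cong \cVbar_{0,L}(\chi^a)\otimes \cLbar_k$; tensoring the sought isomorphism $\cVbar_{0,L}(\chi^a) \cong \cLbar_{-a-12t}$ with $\cLbar_k$ then yields $\cLbar_{k-a-12t}$.

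The key ingredient is the scalar modular form $f(\tau) \df \eta(\tau)^{2a}\Delta(\tau)^t$, where $\Delta = \eta^{24}$. Using that $\eta^2$ has weight one with multiplier $\chi$, and that $\Delta$ has weight $12$ with trivial multiplier, one sees that $f$ is nowhere-vanishing on $\frakh$, transforms as $f(\gamma\tau) = \chi^a(\gamma)(c\tau+d)^{a+12t}f(\tau)$, and has $q$-expansion
$$
f(\tau) = q^{a/12+t}\prod_{n\geq 1}(1-q^n)^{2a+24t} = e^{2\pi i\tau L}\, u(q),
$$
where $u(q) \df \prod_{n\geq 1}(1-q^n)^{2a+24t}$ is a holomorphic nonvanishing function on $\DD$. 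Division by $f$ therefore gives an $\SL_2(\ZZ)$-equivariant map $(z,\tau)\mapsto(z/f(\tau),\tau)$ on $\CC\times \frakh$, and hence a bundle isomorphism $\alpha_1\colon \cV_0(\chi^a) \to \cL_{-a-12t}$ over $\cM^{\rm an}$.

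To upgrade $\alpha_1$ to an isomorphism of the extensions, I would verify descent by exhibiting a matching $\alpha_2$ on the cusp chart. First, the two bundles over $\orb{C_2}{\DD}$ arising from the proofs of Propositions~\ref{prop:canonicalExtensionOfModularForms} and~\ref{prop:extensionOfVValuedModForms} coincide: the $C_2$-action coming from $\cVbar_{0,L}(\chi^a)$ is multiplication by $\chi^a(-I_2) = (-1)^a$, while that from $\cLbar_{-a-12t}$ is multiplication by $(-1)^{-a-12t} = (-1)^a$. I then take $\alpha_2$ to be multiplication by $u(q)^{-1}$, which is $C_2$-equivariant since $u$ depends only on $q$. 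Using the factorization $f(\tau) = e^{2\pi i\tau L}u(q)$, one checks directly that $\alpha_2\circ \phi_L = \varphi\circ \alpha_1$, where $\phi_L$ and $\varphi$ are the gluings defined in Propositions~\ref{prop:extensionOfVValuedModForms} and~\ref{prop:canonicalExtensionOfModularForms}. Thus the pair $(\alpha_1,\alpha_2)$ gives the desired canonical isomorphism.

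The main obstacle is really just careful bookkeeping: tracking the multiplier of $\eta^{2a}\Delta^t$, the $(-1)^a$ sign at the cusp, and, most crucially, the observation that the exponential factor $e^{2\pi i \tau L}$ built into $\phi_L$ matches precisely the leading term of the $q$-expansion of $\eta^{2a}\Delta^t$, with the remaining factor $u(q)$ being a holomorphic unit absorbed into $\alpha_2$. Once this factorization is isolated, the commutativity of the descent diagram falls out.
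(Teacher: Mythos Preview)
Your proof is correct and essentially identical to the paper's: since $\eta^{2a}\Delta^t=\eta^{2(a+12t)}$, your $f$ is exactly the trivializing section the paper uses, your $\alpha_1$ is their division-by-$\eta^{2(a+12t)}$ map, and your $\alpha_2$ (multiplication by $u(q)^{-1}$) coincides with theirs. The only cosmetic difference is that you phrase the key factorization as $f=e^{2\pi i\tau L}u(q)$, whereas the paper unwinds the same identity directly in the verification of $\alpha_2\circ\phi_L=\phi\circ\alpha_1$.
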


\begin{proof}
By Propositions \ref{prop:canonicalExtensionOfModularForms} and \ref{prop:compatibilityOfExtensions}, it suffices to show that
$\cVbar_{0,L}(\chi^a) \cong \cLbar_{-a-12t}$. To this end, let $(\cW_1 = \cV_0(\chi^a),\cW_2,\phi_L)$ be the triple defining $\cVbar_{0,L}(\chi^a)$ as in Proposition \ref{prop:extensionOfVValuedModForms} and let $(\cN_1 = \cL_{-a-12t},\cN_2,\phi)$ be the triple defining $\cLbar_{-a-12t}$ as in Proposition \ref{prop:canonicalExtensionOfModularForms}. Let
$$
\eta(\tau):= e^{2\pi i \tau/24}\displaystyle\prod_{n=1}^{\infty}(1-q^n), \quad q=e^{2\pi i\tau},
$$
be Dedekind's eta function. It is well-known that $\eta^{2(a+12t)}(\tau)$ is a non-vanishing section of $\cV_{a+12t}(\chi^a)$ over $\cM^{\rm an}$. Therefore division by $\eta^{2(a+12t)}(\tau)$ gives a trivialization $\cV_{a+12t}(\chi^a)\cong \cO$. Equivalently, since $\cV_{a+12t}(\chi^a)\cong \cV_0(\chi^a)\otimes\cL_{a+12t}$, we have an isomorphism
$
\alpha_1 : \cV_0(\chi^a)\stackrel{\cong}\longrightarrow \cL_{-a-12t}
$
of line bundles over $\cM^{\rm an}$, given by 
$$
\alpha_1(z,\tau) = (\eta^{-2(a+12t)}(\tau)\,z,\tau).
$$
On the other hand, since $\chi^a(-I_2) = (-1)^a = (-1)^{-a-12t}$, we have $\cW_2 = \cN_2$ as line bundles over the orbifold $\orb{C_2}{\DD}$. Thus any choice of $\alpha_2:\cW_2\cong\cN_2$ is just a line bundle automorphism, hence determined by multiplication by a unit in $\CC[\![q]\!]^{\times}$. We may thus let
$$
\alpha_2(z,q): = \left(\left(\prod_{n=1}^{\infty}(1-q^n)\right)^{-2(a+12t)}z,q\right),
$$ 
which is well-defined since $\prod_{n=1}^{\infty}(1-q^n)$ is a unit in $\CC[\![q]\!]$. The theorem then follows by noting that the diagram 
$$
\xymatrixcolsep{5pc}\xymatrix{
\iota_1^*\cV_0(\chi^a)\ar[r]^{\phi_L}\ar[d]_{\alpha_1} & \iota_2^*\cW_2\ar[d]^{\alpha_2}\\
\iota_1^*\cL_{-a-12t}\ar[r]^{\phi}&\iota_2^*\cN_2 }
$$
is commutative, since 
\begin{align*}
\alpha_2\circ\phi_L(z,\tau) &= \left(e^{-2\pi i\tau(a/12 + t)}\left(\displaystyle\prod_{n=1}^{\infty}(1-q^n)\right)^{-2(a+12t)}\,z,q\right)\\
&= (\eta^{-2(a+12t)}(\tau)\,z, q)\\
&= \phi\circ\alpha_1(z,\tau).
\end{align*}
\end{proof}

\begin{ex}
\label{ex:modFormsVsCuspForms}
If $\chi^a = \mathbf{1}$ is the trivial representation, then $\cV_k(\mathbf{1}) = \cL_k$. If we take the exponent $L = 0$, then $\cVbar_{k,0}(\mathbf{1}) = \cLbar_k$ by Theorem \ref{thm:characters}, thus sections of $\cVbar_k(\mathbf{1})$ are just holomorphic modular forms of weight $k$. On the other hand, if we choose $L=1$, then $\phi_L(z,q) = ((-1)^kq^{-1}z,q)$: if $f$ is a section of $\cVbar_{k,1}(\mathbf{1})$, $k$ even, then $q^{-1}\iota_1^*f \in \CC[\![q]\!]$, i.e. $f$ is a {\em cusp form} of weight $k$. Theorem \ref{thm:characters} then specializes to the well-known isomorphism
$$
\{ \text{cusp  forms of weight } k \}\stackrel{\cong}\longleftrightarrow \{ \text{hol. modular forms of weight } k-12 \}, 
$$
given by divison by $\Delta = \eta^{24}$. 
\end{ex}

The following properties will be used repeatedly in the sequel. In particular, (ii) of the following proposition will be used to compute the Euler characteristic of the vector bundles $\cVbar_{k,L}(\rho)$, while (iii) is used in the discussion of Serre-duality.
\begin{prop}
\label{p:functorialproperties}
Let $\rho$ and $\sigma$ denote representations of $\SL_2(\ZZ)$, where $\rho$ is of dimension $d$. Let $L$ and $L'$ denote choices of exponents for $\rho$ and $\sigma$, respectively. Then the following properties hold:
\begin{enumerate}
\item $\cVbar_{k,L\oplus L'}(\rho\oplus \sigma) \cong \cVbar_{k,L}(\rho)\oplus \cVbar_{k,L'}(\sigma)$;
\item $\det\cVbar_{k,L}(\rho)\cong \cLbar_{dk- 12\Tr(L)}$;
\item $\cVbar_{k,L}(\rho)^\vee \cong \cVbar_{-k,-L^t}(\rho^\vee)$.
\end{enumerate}
\end{prop}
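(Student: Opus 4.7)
The plan is to prove each part by unwinding the descent triple $(\cW_1,\cW_2,\phi_L)$ of Proposition \ref{prop:extensionOfVValuedModForms} and matching it against the corresponding triple on the right-hand side of each isomorphism. All three identities are then verified by exhibiting compatible isomorphisms $(\alpha_1,\alpha_2)$ of the type described immediately after Proposition \ref{prop:extensionOfVValuedModForms}.

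For (i), all three ingredients of the triple are visibly functorial under direct sums: the action \eqref{eqn:vvaluedAction} splits on $V\oplus V'$, the $C_2$-action defining $\cW_2$ splits as well, and $e^{-2\pi i \tau(L\oplus L')} = e^{-2\pi i\tau L}\oplus e^{-2\pi i \tau L'}$, so $\phi_{L\oplus L'} = \phi_L \oplus \phi_{L'}$. Hence the triple associated to $\cVbar_{k,L\oplus L'}(\rho\oplus\sigma)$ is literally the direct sum of the triples associated to $\cVbar_{k,L}(\rho)$ and $\cVbar_{k,L'}(\sigma)$.

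For (ii), my strategy is to reduce to Theorem \ref{thm:characters}. First I would check that taking determinants commutes with the extension construction, by computing the top exterior power in each slot of the triple: (a) $\det \cV_k(\rho) = \cV_{dk}(\det\rho)$ since the cocycle becomes $(c\tau+d)^{dk}\det\rho(\gamma)$; (b) $\det \cW_2$ is the line bundle built from the $C_2$-action $(\pm 1)^{dk}\det\rho(\pm I_2)$; and (c) $\det\phi_L$ acts on fibers by $e^{-2\pi i \tau \Tr L}$, hence equals $\phi_{\Tr L}$. This yields $\det \cVbar_{k,L}(\rho) \cong \cVbar_{dk,\Tr L}(\det\rho)$. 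Now $\det\rho$ is a character of $\SL_2(\ZZ)$, so equals $\chi^a$ for a unique $a\in\{0,\dots,11\}$, and the identity $e^{2\pi i \Tr L} = \det\rho(T) = e^{2\pi i a/12}$ forces $\Tr L = a/12 + t$ for some $t\in\ZZ$. Theorem \ref{thm:characters} then delivers $\cVbar_{dk,\Tr L}(\chi^a) \cong \cLbar_{dk-a-12t} = \cLbar_{dk-12\Tr L}$.

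For (iii), the plan is to dualize the triple defining $\cVbar_{k,L}(\rho)$ and match the result against the triple for $\cVbar_{-k,-L^t}(\rho^\vee)$. Dualizing \eqref{eqn:vvaluedAction} replaces $(c\tau+d)^k \rho(\gamma)$ by $(c\tau+d)^{-k}\rho^\vee(\gamma)$, giving $\cV_k(\rho)^\vee = \cV_{-k}(\rho^\vee)$; the analogous computation for $\cW_2$ works using $(-1)^{-k}=(-1)^k$ and the fact that $\rho(-I_2)^2=I$ forces $\rho(-I_2)^{-t} = \rho^\vee(-I_2)$. For the transition, the dual of $\phi_L$ is the fiberwise transpose of $\phi_L^{-1}$, which on $V^\vee$ acts by $e^{2\pi i \tau L^t}$; this is precisely $\phi_{-L^t}$. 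I expect the sign-and-transpose bookkeeping in (iii) to be the main obstacle: since an exponent matrix for $\rho^\vee$ is only determined modulo integer-valued endomorphisms, one must verify that dualization produces exactly $-L^t$ on the nose rather than some other valid exponent (which would give a bundle that merely agrees with $\cVbar_{-k,-L^t}(\rho^\vee)$ up to a twist by a power of $\cLbar$). Tracking the explicit exponential in $\phi_L$ is what pins the exponent down unambiguously.
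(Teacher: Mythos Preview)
Your proposal is correct and follows essentially the same approach as the paper: both argue by unwinding the descent triple and reducing part~(ii) to Theorem~\ref{thm:characters}, while treating parts~(i) and~(iii) as direct computations with the transition map $\phi_L$. The only cosmetic difference is that for~(ii) the paper first invokes Proposition~\ref{prop:compatibilityOfExtensions} and the identity $\det(\cV\otimes\cU)\cong(\det\cV)^{\otimes t}\otimes(\det\cU)^{\otimes r}$ to split off $\cLbar_{dk}$ before computing $\det\cVbar_{0,L}(\rho)=\cVbar_{0,\Tr L}(\det\rho)$, whereas you compute $\det\cVbar_{k,L}(\rho)=\cVbar_{dk,\Tr L}(\det\rho)$ in one step directly from the triple; both routes arrive at the same application of Theorem~\ref{thm:characters}.
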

\begin{proof}
The first claim is obvious. For the second claim recall that if $\cV$ is a vector bundle of rank $r$ and $\cU$ is a vector bundle of rank $t$, then $\det (\cV\otimes \cU) \cong (\det \cV)^{\otimes t}\otimes (\det \cU)^{\otimes r}$, so that $\det\cVbar_{k,L}(\rho)\cong (\det \cVbar_{0,L}(\rho))\otimes \cLbar_{dk}$. Next, since for a square matrix $M$ one has $\det e^{M} = e^{\Tr(M)}$, one sees that $\det \cVbar_{0,L}(\rho)$ is the extension $\cVbar_{0,\Tr(L)}(\det\rho)$. Thus, (2) of the Proposition now follows by Theorem \ref{thm:characters}. The third claim is also obvious from the definition of $\cVbar_{k,L}(\rho)$, since dualizing corresponds to taking inverses and transposing, so that the matrix used to construct $\cVbar_{k,L}(\rho)^\vee$ is $e^{-2\pi i L^t\tau}$.
\end{proof}

Theorem \ref{thm:characters} highlights the fact that the extensions $\cVbar_{k,L}(\rho)$ depend fundamentally on the choice of exponent matrix $L$. Thankfully, the following result of Gantmacher classifies all possible exponent matrices. 

\begin{thm}[Gantmacher \cite{Gantmacher}]
\label{t:gantmacher}
Fix a branch $\log$ of the complex logarithm. Let $G \in \GL_n(\CC)$ have the Jordan canonical form
\[
  Z^{-1}GZ = J = \diag(J_1(\lambda_1),J_2(\lambda_2),\ldots, J_r(\lambda_r)).
\]
Then all solutions to $e^{X} = G$ are given by
\[
  X = ZU\diag(L_1^{(t_1)},L_2^{(t_2)},\ldots,L_r^{(t_r)})U^{-1}Z^{-1},
\]
where, if $J_k(\lambda_k)$ is an $n_k\times n_k$ Jordan block with $\lambda_k$ on the diagonal, then
\[
  L_k^{(t_k)} = 
\left(\begin{smallmatrix}
\log(\lambda_k) + 2\pi i t_k & \lambda_k^{-1} & -\lambda_k^{-2} & \cdots & (-1)^{n_k}\lambda_k^{1-n_k}\\
0 & \log(\lambda_k) + 2\pi i t_k & \lambda_k^{-1} & \cdots & (-1)^{n_k-1}\lambda_k^{2-n_k}\\
0 &0& \log(\lambda_k) + 2\pi i t_k & \cdots & (-1)^{n_k-2}\lambda_k^{3-n_k}\\
&\vdots&&&\vdots\\
0&0&0&\cdots&\log(\lambda_k) +2\pi i t_k 
\end{smallmatrix}\right),
\]
the $t_k$ are arbitrary integers, and $U$ is any invertible matrix that commutes with $J$.
\end{thm}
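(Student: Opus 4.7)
The plan is to work by reduction and then case analysis. First I would note that $e^{ZYZ^{-1}} = Ze^{Y}Z^{-1}$, so the equation $e^X = G$ is equivalent to $e^Y = J$ under the substitution $X = ZYZ^{-1}$; hence we may assume $G = J$ is already in Jordan form. A basic observation I would record immediately is that any solution $Y$ automatically commutes with $e^Y = J$, so $Y$ lies in the centralizer $C(J)$ of $J$ in $\GL_n(\CC)$.

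Next I would exhibit an explicit solution for a single Jordan block. Writing $J_k(\lambda_k) = \lambda_k(I + N_k/\lambda_k)$ with $N_k$ the nilpotent shift, the series
\[
\log(I + N_k/\lambda_k) = \sum_{m\geq 1} \tfrac{(-1)^{m-1}}{m}(N_k/\lambda_k)^m
\]
terminates since $N_k^{n_k}=0$, and so defines an explicit nilpotent matrix whose exponential is $I + N_k/\lambda_k$. Adding a scalar corresponding to a choice of branch, the matrix
\[
L_k^{(t_k)} \df \bigl(\log(\lambda_k) + 2\pi i t_k\bigr)I + \log(I+N_k/\lambda_k)
\]
has the form displayed in the statement and satisfies $e^{L_k^{(t_k)}} = J_k(\lambda_k)$. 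Assembling gives $L = \diag(L_1^{(t_1)},\ldots,L_r^{(t_r)})$ with $e^L = J$, and conjugating by any $U \in C(J)$ gives further solutions since conjugation commutes with the exponential.

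The main content is the converse: every $Y \in C(J)$ with $e^Y = J$ arises in this way. My approach is to analyze the generalized eigenspace decomposition $V = \bigoplus_\mu V^{(Y)}_\mu$. Since $Y$ commutes with $J$, each $V^{(Y)}_\mu$ is $J$-invariant, and each eigenvalue $\mu$ of $Y$ is necessarily a logarithm of some eigenvalue $\lambda_j$ of $J$. On each $V^{(Y)}_\mu$ write $Y = \mu I + N'$; then $e^{N'} = e^{-\mu}J|_{V^{(Y)}_\mu}$ is unipotent and $N'$ is nilpotent. The exponential restricted to a commuting nilpotent subalgebra is a bijection onto the corresponding unipotent set, with inverse given by the finite power series $\log(I+\cdot)$. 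Hence $N'$ is uniquely determined by $J|_{V^{(Y)}_\mu}$, so the restriction of $Y$ to each $V^{(Y)}_\mu$ is pinned down by the choice of $\mu$ alone.

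The hard part is then to show that by an element of $C(J)$ one can conjugate the $J$-invariant decomposition $V = \bigoplus_\mu V^{(Y)}_\mu$ onto the standard one coming from the Jordan basis of $J$, so that $Y$ becomes block-diagonal with blocks of the form $L_k^{(t_k)}$. This reduces to the transitivity statement that $C(J)$ acts transitively on the set of $J$-invariant direct-sum decompositions of $V$ whose summands carry the prescribed multiset of Jordan types. This is immediate when all $\lambda_k$ are distinct; but when several Jordan blocks share an eigenvalue and $C(J)$ mixes them nontrivially, one must exploit the explicit description of $C(J)$ on each generalized eigenspace (an algebra of upper-triangular block-Toeplitz matrices) to construct the conjugating element by induction on Jordan block sizes. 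Once this transitivity is in hand, $Y$ takes the required block-diagonal form, and undoing the initial reduction $X = ZYZ^{-1}$ completes the proof.
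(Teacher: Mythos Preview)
The paper does not give its own proof of this theorem; it simply cites Gantmacher's textbook \cite{Gantmacher} and uses the result as a black box. So there is nothing in the paper to compare your argument against.

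That said, your outline is essentially correct and follows the natural route (which is also Gantmacher's). Two small comments. First, a notational slip: you write that $Y$ lies in the centralizer $C(J)$ of $J$ in $\GL_n(\CC)$, but $Y$ need not be invertible (take $J=I$, $Y=0$); you mean the centralizer in $\Endo(V)$. The matrix $U$ in the statement, by contrast, is required to be invertible.

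Second, the step you flag as ``the hard part'' is less delicate than you suggest, and you do not need the explicit block-Toeplitz description of $C(J)$ or an induction on block sizes. Once you know that the generalized eigenspace decomposition $V=\bigoplus_\mu V^{(Y)}_\mu$ is $J$-invariant, uniqueness of Jordan form gives that the Jordan blocks of $J$ on the summands partition the Jordan blocks of $J$ on $V$. Now simply choose a Jordan basis for $J$ on each $V^{(Y)}_\mu$ and let $U$ be the linear map sending these bases, block by block, to the corresponding standard Jordan basis vectors. By construction $U$ intertwines $J$ with itself, hence $U\in C(J)\cap\GL_n(\CC)$, and $U$ carries the decomposition $\bigoplus_\mu V^{(Y)}_\mu$ onto a union of standard block subspaces. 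Your uniqueness argument for the nilpotent part (via the polynomial inverse $\log(I+\cdot)$) then finishes the job, since $\log$ commutes with conjugation.
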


In particular, if the exponent matrix for a representation $\rho$ is defined by $\rho(T) = e^{2\pi i L}$, then all the eigenvalues of $L$ will be of the form $\frac{1}{2\pi i}\log(\lambda_k)+ t_k$, and by choosing the $t_k$'s appropriately we can arrange for all these eigenvalues to have real part in a given half-open interval of length 1.  

\begin{dfn}
\label{d:standardexponents}
Let $\rho \colon \SL_2(\ZZ) \to \GL(V)$ denote a representation, and let $I \subseteq \RR$ denote a half-open interval of length $1$. Then a \emph{choice of exponents for $\rho$ relative to $I$} is an endomorphism $L$ of $V$ satisfying the two properties:
\begin{enumerate}
\item $\rho(T) = e^{2\pi i L}$;
\item the eigenvalues of $L$ have real part in $I$.
\end{enumerate}
A \emph{standard choice of exponents} for $\rho$ is a choice of exponents relative to $I = [0,1)$.
\end{dfn}

The choice of exponents completely determines the isomorphism class of the extended vector bundle $\cVbar_{k,L}(\rho)$, in the following sense:

\begin{prop}
Let $L_1$ and $L_2$ be two choices of exponents for $\rho$ made relative to the same interval. Then there is an isomorphism $\cVbar_{k,L_1}(\rho)\cong\cVbar_{k,L_2}(\rho)$ depending only on the matrix $U$ of Theorem \ref{t:gantmacher}. 
\end{prop}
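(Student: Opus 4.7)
The plan is to use Gantmacher's theorem to relate $L_1$ and $L_2$ via conjugation by a matrix $M$ depending on $U$, and then to realize the isomorphism $\cVbar_{k,L_1}(\rho) \cong \cVbar_{k,L_2}(\rho)$ through a change-of-frame construction built from $M$.

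Since $L_1$ and $L_2$ both have eigenvalues with real parts in the same interval of length one, the integer parameters $t_k$ in Gantmacher's formula are determined by the interval (each $t_k$ just selects the branch of $\log \lambda_k$ whose real part lies in the given interval) and hence coincide for the two exponents. Writing $L_i = \frac{1}{2\pi i} Z U_i \diag(L_k^{(t_k)}) U_i^{-1} Z^{-1}$ with common $Z$ and common $t_k$'s but possibly different $U_i$ in the centralizer of the Jordan form $J$ of $\rho(T)$, I would set $U = U_2 U_1^{-1}$ and $M = Z U Z^{-1}$. Then $M$ commutes with $\rho(T)$, and a direct computation gives $L_2 = M L_1 M^{-1}$, so $U$ plays precisely the role indicated in the statement.

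Using the descent description of vector bundles on $\cMbar^{\rm an}$ from Proposition \ref{prop:extensionOfVValuedModForms}, I would then construct the pair $(\alpha_1, \alpha_2) : \cVbar_{k,L_1}(\rho) \to \cVbar_{k,L_2}(\rho)$ with both components arising from the descent of multiplication by $M$ on the trivializing covers. The compatibility with the gluings $\phi_{L_i}$ on the annular overlap reduces to the matrix identity $e^{-2\pi i \tau L_2} M = M e^{-2\pi i \tau L_1}$, which is immediate from $L_2 = M L_1 M^{-1}$ upon expanding the exponentials as power series.

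The main point to verify is that multiplication by $M$ really defines a bundle map on both sides: it must intertwine the $\SL_2(\ZZ)$-action on $\cV_k(\rho)$ and the $C_2$-action on $\cW_2$. The crucial structural ingredient is that the block-diagonal matrix $\diag(L_k^{(t_k)})$ is, on each generalized eigenspace of $J$, a polynomial in $J$, namely the appropriate truncated matrix logarithm. Under the interval hypothesis the truncation data match across all Jordan blocks with a common eigenvalue, so Chinese remaindering across the distinct eigenvalues of $J$ produces a single polynomial $p$ with $p(J) = \diag(L_k^{(t_k)})$. Consequently any element of the centralizer of $J$ --- and in particular our $M$ --- also centralizes $\diag(L_k^{(t_k)})$, which is the compatibility needed to upgrade the formal identities above into a bona fide isomorphism of vector bundles on $\cMbar^{\rm an}$. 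Verifying this last descent step, rather than the bookkeeping with exponentials, is where the main care of the proof is spent.
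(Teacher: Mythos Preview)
Your overall strategy mirrors the paper's: both invoke Gantmacher's theorem to relate $L_1$ and $L_2$ by conjugation by a matrix in the centralizer of $\rho(T)$, and then build the bundle isomorphism from multiplication by that matrix. The paper first reduces to the case $\rho(-I_2)=\pm 1$ by splitting $\rho$ into even and odd summands and also assumes $\rho(T)=J$ is already in Jordan form (so $Z=I$); you keep $Z$ general and skip the even/odd reduction, but these are cosmetic differences.

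The gap is in your verification of $\alpha_1$. You correctly state that for multiplication by $M$ to descend to a bundle map on $\cV_k(\rho)=\orb{\SL_2(\ZZ)}{V\times\frakh}$ over $\cM^{\rm an}$, the matrix $M$ must intertwine the full $\SL_2(\ZZ)$-action, i.e.\ commute with $\rho(\gamma)$ for \emph{every} $\gamma$. But what you then prove --- that $M$ centralizes $\diag(L_k^{(t_k)})$ because the latter is a polynomial in $J$ --- only yields commutation with $\rho(T)$, not with $\rho(S)$ or $\rho(R)$. So the ``compatibility needed'' in your final sentence is not the compatibility you correctly identified two sentences earlier. The paper handles this point differently: after the even/odd reduction it only checks that multiplication by $U$ is a bundle automorphism of the \emph{pullback} $\iota_1^*\cV_k(\rho)$ over $\orb{\langle -I_2,T\rangle}{\frakh}$, which requires commuting only with $\rho(T)$ and with the scalar $\rho(-I_2)$ --- both of which hold. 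It is worth noting, though, that your polynomial-in-$J$ observation is sharper than anything the paper uses, and pushed one step further it trivializes the entire statement: since each $U_i$ commutes with $J$ and hence with $\diag(L_k^{(t_k)})=p(J)$, conjugation by $U_i$ fixes $\diag(L_k^{(t_k)})$, so in fact $L_1=L_2$ and one may take $(\alpha_1,\alpha_2)=(\id,\id)$.
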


\begin{proof}
Decompose $\rho = \rho^+\oplus \rho^-$ into even and odd parts. Then since $\cVbar_{k,L}(\rho^+\oplus\rho^-)\cong \cVbar_{k,L}(\rho^+)\oplus\cVbar_{k,L}(\rho^-)$, we may assume that $\rho(-I_2) = \pm 1$. Under this hypothesis, let $(\cW_1,\cW_2,\phi_{L_1})$ and $(\cU_1,\cU_2,\phi_{L_2})$ be the triples defining $\cVbar_{k,L_1}(\rho)$ and $\cVbar_{k,L_2}(\rho)$, respectively, as in Proposition \ref{prop:extensionOfVValuedModForms}. Since both vector bundles are extensions of  $\cV_k(\rho)$, we have $\cW_i=\cU_i$ for $i=1,2$, and it thus suffices to show that $\phi_{L_1}$ and $\phi_{L_2}$ differ by bundle automorphisms. Assume that $\rho(T)=J$ is in Jordan canonical form, so that $Z=I_n$ in Theorem \ref{t:gantmacher}. By the hypothesis on the choice of interval $I$, the matrices $L_k^{(t_k)}$ in Theorem \ref{t:gantmacher} are the same for $L_1$ and $L_2$, thus we may further assume that $L_1 = UL_2U^{-1}$, where  $U$ is chosen as in Theorem \ref{t:gantmacher}. We then have
\begin{align*}
\phi_{L_1}(v,\tau) &= (e^{-2\pi i\tau UL_2U^{-1}}\,v,q) \\
&= (U\,e^{-2\pi i\tau L_2}U^{-1}\,v,q) \\
&= \phi_U\,\phi_{L_2}\,\phi'_{U^{-1}}(v,\tau)
\end{align*}
where $\phi'_{U^{-1}}(v,\tau) = (U^{-1}v,\tau)$ and $\phi_{U}(v,q) = (Uv,q)$. Now, $\phi'_{U^{-1}}(v,\tau)$ is a bundle automorphism of $\iota_1^*\cV_k(\rho)$, since $U$ commutes with $\rho(T)=J$ by Theorem \ref{t:gantmacher}. On the other hand, $\phi_{U}(v,q) = (Uv,q)$ is a bundle automorphism of $\iota_2^*\cW_2$, since by the assumptions made at the beginning of the paragraph we have that $\rho(-I_2) = \pm 1$, which commutes with $U$ as well. 
\end{proof}

\begin{ex}
Let $\rho = \mathbf{1}$ as in Example \ref{ex:modFormsVsCuspForms}. Then the standard choice of exponents $I = [0,1)$ yields $\cVbar_k(\rho) \cong\cLbar_k$, the line bundle of  weight $k$ holomorphic modular forms, whereas the choice $I = (0,1]$ gives the line bundle of weight $k$ cusp forms. 
\end{ex}

The previous example motivates the following important definition:

\begin{dfn}
Let $\rho \colon \SL_2(\ZZ) \to \GL(V)$ denote a representation, and let $k$ denote an integer. {\em Holomorphic vector valued modular forms} for $\rho$ of weight $k$ are global sections of the extension of $\cV_k(\rho)$ corresponding to a standard choice of exponents for $\rho$. The \emph{holomorphic cusp forms} for $\rho$ of weight $k$ are global sections of the extension of $\cV_k(\rho)$ corresponding to a choice of exponents made relative to the interval $(0,1]$. We denote by $M_k(\rho)$ and $S_k(\rho)$ the vector spaces of weight $k$ holomorphic vector valued modular forms and cusp forms, respectively, for $\rho$.
\end{dfn}

In what follows, the simplified notation $\cVbar_k(\rho)$ denotes the extension of $\cV_k(\rho)$ relative to the standard choice of exponents. Similarly, $\cSbar_k(\rho)$ will always denote the extension of $\cV_k(\rho)$ made relative to the interval $(0,1]$. There is an inclusion $\cSbar_k(\rho) \to \cVbar_k(\rho)$ of vector bundles that is an isomorphism away from $\infty$. However, if $\rho(T)$ does not have $1$ as an eigenvalue, then in fact $\cVbar_k(\rho) = \cSbar_k(\rho)$. 

\begin{rmk}
Multiplication by $\eta^{2n}$, where $n \in \ZZ_{\geq 1}$, defines an injection of vector bundles $\cVbar_k(\rho) \hookrightarrow \cSbar_{k+n}(\rho\otimes \chi^n)$. The image is the bundle obtained by extending $\cV_{k+n}(\rho\otimes \chi^{n})$ using the interval $[\frac{n}{12},\frac{n}{12}+1)$. This is sometimes useful for determining dimensions of spaces of modular forms of weight one.
\end{rmk}

Modular forms and cusp forms are related via duality as follows.

\begin{prop}
\label{p:duality}
For every integer $a$ one has
\[\cVbar_{k,L} (\rho)^\vee \cong \begin{cases}
\cSbar_{a+12-k}(\rho^\vee\otimes \chi^a)& L \text{ relative to } [\frac{a}{12},\frac{a}{12}+1),\\
\cVbar_{a+12-k}(\rho^\vee\otimes \chi^a)& L \text{ relative to } (\frac{a}{12},\frac{a}{12}+1].
\end{cases}
\]
\end{prop}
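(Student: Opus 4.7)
The plan is to reduce the statement to Proposition \ref{p:functorialproperties}(iii) by tensoring with a trivial line bundle coming from a suitable power of the eta character. Starting from $\cVbar_{k,L}(\rho)^\vee \cong \cVbar_{-k,-L^t}(\rho^\vee)$, which holds for any exponent matrix $L$, the goal is to rewrite the right-hand side as an extension of $\cV_{a+12-k}(\rho^\vee \otimes \chi^a)$ whose exponent happens to be placed in either $(0,1]$ or $[0,1)$, depending on the original interval for $L$.

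The main ingredient beyond Proposition \ref{p:functorialproperties}(iii) is a tensor-product formula
$$\cVbar_{k,L}(\rho) \otimes \cVbar_{k',L'}(\sigma) \cong \cVbar_{k+k',\, L\otimes I + I\otimes L'}(\rho \otimes \sigma),$$
which is not stated explicitly in the paper but falls out immediately from the construction in Proposition \ref{prop:extensionOfVValuedModForms}: the glueing datum for the tensor product is the tensor of the individual glueing data, and one checks $\phi_L \otimes \phi_{L'} = \phi_{L\otimes I + I\otimes L'}$ since the two summands commute. Specializing to $\sigma = \chi^a$, $k' = a+12$, and $L' = a/12 + 1$, Theorem \ref{thm:characters} (applied with $t=1$) identifies $\cVbar_{a+12,\, a/12 + 1}(\chi^a)$ with $\cLbar_0 = \cO$, so tensoring by it is the identity. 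Combining with step one yields
$$\cVbar_{k,L}(\rho)^\vee \cong \cVbar_{a+12-k,\; -L^t + (a/12 + 1)I}(\rho^\vee \otimes \chi^a).$$

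It remains to locate the eigenvalues of the shifted exponent $-L^t + (a/12 + 1)I$. If $L$ is a choice of exponents relative to $[a/12, a/12+1)$, then $-L^t$ has eigenvalues with real part in $(-a/12 - 1, -a/12]$, so $-L^t + (a/12+1)I$ has eigenvalues with real part in $(0,1]$; by definition, this makes the right-hand side $\cSbar_{a+12-k}(\rho^\vee \otimes \chi^a)$. The second case is symmetric: if $L$ is relative to $(a/12, a/12+1]$, then $-L^t + (a/12 + 1)I$ is relative to $[0,1)$, giving the standard extension $\cVbar_{a+12-k}(\rho^\vee \otimes \chi^a)$.

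The only non-bookkeeping step is establishing the tensor-product formula above, and even that is a direct unwinding of the gluing data from Proposition \ref{prop:extensionOfVValuedModForms}; the rest is careful tracking of half-open intervals under negation and shifting. I do not foresee any essential obstacle beyond the interval arithmetic.
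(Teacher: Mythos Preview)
Your proposal is correct and is essentially the same argument as the paper's, just spelled out in more detail: the paper's one-line proof invokes Proposition~\ref{p:functorialproperties}(iii) and then ``multiplying by $\eta^{2a+24}$,'' which at the bundle level is exactly your tensoring with $\cVbar_{a+12,\,a/12+1}(\chi^a)\cong\cLbar_0$. Your explicit tensor-product formula and interval bookkeeping simply make precise what the paper leaves to the reader.
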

\begin{proof}
This follows by part (3) of Proposition \ref{p:functorialproperties}, and by multiplying by $\eta^{2a+24}$.
\end{proof}

The following result, and its proof, are due to Geoff Mason \cite{Mason1}. We include the proof as \cite{Mason1} states the result in a slightly weaker form, although Mason's proof in fact gives the following stronger result.
\begin{prop}[Corollary 3.8 of \cite{Mason1}]
\label{c:weightbound}
Let $\rho$ denote a $d$-dimensional representation of $\SL_2(\ZZ)$, and let $L$ denote a choice of exponents for $\rho$. If $\cVbar_{k,L}(\rho)$ has a global section whose component functions are linearly independent over $\CC$, then
\[k\geq \frac{12\Tr(L)}{d} + 1-d.\]
\end{prop}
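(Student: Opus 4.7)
The plan is to form the Wronskian of the components of $F$, show it is a nonzero scalar modular form of known weight and character, and then apply Theorem \ref{thm:characters} to identify it as a section of a line bundle $\cLbar_m$; the requirement that $\cLbar_m$ admit a nonzero global section forces $m \geq 0$, which will rearrange to give the stated inequality.

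Choose a basis of $V$, write $F = (f_1,\ldots,f_d)^T$, and form the $d\times d$ matrix
\[
\Xi(\tau) = \bigl(\,F(\tau),\,F'(\tau),\,F''(\tau),\,\ldots,\,F^{(d-1)}(\tau)\,\bigr),
\]
with Wronskian $W := \det \Xi$. Differentiating the transformation law \eqref{eq:translaw} iteratively, and using $\frac{d}{d\tau}(\gamma\tau) = (c\tau+d)^{-2}$, one verifies by induction that $\Xi(\gamma\tau) = \rho(\gamma)\,\Xi(\tau)\,A(\gamma,\tau)$ where $A$ is upper triangular with diagonal entries $A_{jj} = (c\tau+d)^{k+2j}$ for $j = 0,1,\ldots,d-1$. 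Taking determinants gives
\[
W(\gamma\tau) = (c\tau+d)^{N}\,\det\rho(\gamma)\,W(\tau), \qquad N := dk + d(d-1),
\]
so $W$ is a scalar modular form of weight $N$ with character $\det\rho$. The linear independence over $\CC$ of the $f_j$ on the connected domain $\frakh$ ensures $W \not\equiv 0$ by the classical Wronskian test.

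To identify which extension of $\cV_N(\det\rho)$ contains $W$, analyze the cusp behavior. Since $F$ is a section of $\cVbar_{k,L}(\rho)$, near the cusp one has $F(\tau) = e^{2\pi i L\tau}H(q)$ with $H \colon \DD \to V$ holomorphic, and a direct computation yields
\[
F^{(j)}(\tau) = (2\pi i)^j\, e^{2\pi i L\tau}\bigl(L + q\tfrac{d}{dq}\bigr)^j H(q).
\]
Factoring $e^{2\pi i L\tau}$ from each column of $\Xi$ and taking determinants gives
\[
W(\tau) = (2\pi i)^{d(d-1)/2}\,q^{\Tr L}\,g(q)
\]
for some function $g$ holomorphic at $q=0$. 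Writing $\det\rho = \chi^a$ with $a\in\{0,\ldots,11\}$ and $\Tr L = a/12 + t$ with $t \in \ZZ$, this exhibits $W$ as a global section of $\cVbar_{N,\Tr L}(\chi^a)$, which by Theorem \ref{thm:characters} is isomorphic to $\cLbar_{N - a - 12t} = \cLbar_{N - 12\Tr L}$. Since $W$ is nonzero, $\cLbar_{N - 12\Tr L}$ has a nonzero global section; as the space of scalar weight-$m$ holomorphic modular forms for $\SL_2(\ZZ)$ is zero for $m<0$, we conclude $N \geq 12\Tr L$, which rearranges to $k \geq 12\Tr L/d + 1 - d$.

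The two places requiring care are the upper-triangular shape of $A$ (where one must verify that off-diagonal terms from repeated differentiation do not contribute to $\det A$) and the cusp computation (where one must check that the leading exponent of $W$ in $q$ is at most $\Tr L$, so that $W$ genuinely lies in $\cVbar_{N,\Tr L}(\chi^a)$). The latter is automatic from the factorization above; even if $g(0)$ happens to vanish, $W$ still lies in the extension with possibly a higher order of vanishing, which only strengthens the inequality.
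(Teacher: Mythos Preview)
Your argument is correct and follows the same Wronskian strategy as the paper. The only difference is cosmetic: the paper forms the Wronskian using the modular derivative $D_k = q\frac{d}{dq} - \frac{kE_2}{12}$, which maps sections of $\cVbar_{k,L}(\rho)$ to sections of $\cVbar_{k+2,L}(\rho)$, so that $W(F)$ is automatically a global section of $\bigotimes_{r=0}^{d-1}\det\cVbar_{k+2r,L}(\rho)\cong \cLbar_{d(dk+d(d-1)-12\Tr L)}$ by Proposition~\ref{p:functorialproperties}(2), with no separate cusp analysis required. Your ordinary Wronskian and the modular one agree up to the nonzero scalar $(2\pi i)^{-d(d-1)/2}$, since each column $D_k^jF$ differs from $(2\pi i)^{-j}F^{(j)}$ by a combination of lower-order columns; so the two routes are equivalent, and your explicit verification of the transformation law and the $q^{\Tr L}$ factor at the cusp simply recovers by hand what the modular-derivative formalism packages for free.
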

\begin{proof}
For ease of notation, let $\cVbar_k:= \cVbar_{k,L}(\rho)$. Recall the modular derivative $D_k = q\frac{d}{dq} - \frac{kE_2}{12}$, which maps sections of $\cVbar_k$ to $\cVbar_{k+2}$. Define $D_k^r = D_{k+2(r-1)} \circ \cdots \circ D_{k+2}\circ D_k$. Then if $F = (f_j)$ is a global section of $\cVbar_k$, the so-called modular Wronskian of $F$, as introduced by Mason in \cite{Mason1}, is defined as the determinant
\[
  W(F) = \det \left(\begin{matrix}
f_1 & D_kf_1 & D_k^2 f_1 & \cdots & D^{d-1}_kf_1\\
f_2 & D_kf_2 & D_k^2 f_2 & \cdots & D^{d-1}_kf_2\\
f_3 & D_kf_3 & D_k^2 f_3 & \cdots & D^{d-1}_kf_3\\
 & \vdots & &&\vdots\\
f_d & D_kf_d & D_k^2 f_d & \cdots & D^{d-1}_kf_d\\    
\end{matrix}
  \right).
\]
Thus, $W(F)$ is a global section of $\det\left(\bigoplus_{r = 0}^{d-1} \cVbar_{k+2r}\right) \cong \bigotimes_{r = 0}^{d-1} \det \cVbar_{k+2r}$ by definition, and it is nonzero by hypothesis. Since $\cLbar_x$ only has nonzero global sections if $x > 0$, the claim follows by (2) of Proposition \ref{p:functorialproperties}.
\end{proof}

\begin{rmk}
If $\rho$ is irreducible, then the linear independence hypothesis of Proposition \ref{c:weightbound} is satisfied by any nonzero global section of $\cVbar_{k,L}(\rho)$. In this case Proposition \ref{c:weightbound} gives a lower bound on the minimal weights $k_1$ and $k_2$ such that $M_{k_1}(\rho) \neq 0$ and $S_{k_2}(\rho) \neq 0$.
\end{rmk}

As is well-known, the global sections of $\cLbar_0$ are just the constant functions and the line bundles $\cLbar_k$ have no global sections for $k<0$. These two features need not be true for vector valued modular forms. In general, there is a natural injective map $V^{\SL_2(\ZZ)} \hookrightarrow M_0(\rho)$ whose image consists of constant functions. It is known \cite{KnoppMason3}, \cite{Mason1} that there exist nonconstant vector valued modular forms of weight zero for certain representations $\rho$. Moreover, there are nonzero holomorphic vector valued modular forms of negative weight for certain representations $\rho$ (e.g. $f(\tau) = (\tau,1) \in M_{-1}(\rho)$, where $\rho$ is the standard inclusion $\SL_2(\ZZ) \hookrightarrow \GL_2(\CC)$).
\begin{dfn}
A representation $\rho$ of $\SL_2(\ZZ)$ is said to be \emph{good} if $M_0(\rho\otimes \chi^{a})$ consists only of constant functions for $a = 0,\ldots, 11$.
\end{dfn}
\begin{rmk}
\label{rmk:cuspFormsAreZeroForGoodReps}
Note that if $\rho$ is good then $S_0(\rho \otimes \chi^a) = 0$ for all $a$.
\end{rmk}
\begin{lem}
\label{l:finiteimagegood}
All representations of $\SL_2(\ZZ)$ of finite image are good.
\end{lem}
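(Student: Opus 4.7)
The plan is to reduce to the classical fact that a holomorphic function on a compact Riemann surface is constant. Set $\sigma \df \rho\otimes \chi^a$; since $\rho$ has finite image and $\chi$ has order 12, $\sigma$ also has finite image, so $\Gamma \df \ker\sigma$ is a finite index subgroup of $\SL_2(\ZZ)$. If $\sigma(-I_2)\neq 1$ then $F = \sigma(-I_2)F$ forces $M_0(\sigma)=0$, so we may assume $-I_2\in\Gamma$. Any $F\in M_0(\sigma)$ satisfies $F(\gamma\tau) = F(\tau)$ for $\gamma\in\Gamma$, so $F$ descends to a holomorphic $V$-valued function $\bar F$ on $Y(\Gamma) \df \orb{\Gamma}{\frakh}$.

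The key step is to check that $\bar F$ extends holomorphically to the compactified modular curve $X(\Gamma)$ at every cusp. Because $\sigma(T)$ has finite order $N$, it is diagonalizable with eigenvalues in $\mu_N$, so one can choose a basis in which the standard exponent is $L = \diag(a_1/N,\ldots,a_d/N)$ with $a_i\in\{0,\ldots,N-1\}$. The condition that $F$ be a section of $\cVbar_0(\sigma)$ then reads $F_i(\tau) = q^{a_i/N}\tilde F_i(q)$ with $\tilde F_i\in\CC[\![q]\!]$. The width of the cusp $\infty$ in $\Gamma$ is exactly $N$, so the local coordinate on $X(\Gamma)$ at $\infty$ is $w = q^{1/N}$ and $F_i = w^{a_i}\tilde F_i(w^N)$ is visibly holomorphic at $w=0$. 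For any other cusp $c = \gamma\infty$ with $\gamma\in\SL_2(\ZZ)$, the weight-zero transformation law gives $F(\gamma\tau) = \sigma(\gamma)F(\tau)$, whose components are $\CC$-linear combinations of the $F_i$. Conjugating $\Gamma$ by $\gamma$ does not change the cusp width since $\sigma(\gamma)\sigma(T)^m\sigma(\gamma)^{-1}=1$ iff $\sigma(T)^m=1$ iff $N\mid m$, so exactly the same computation produces a power series in the local coordinate at $c$.

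Once $\bar F$ is shown to extend to all of $X(\Gamma)$, a holomorphic function argument finishes the proof: pass to a torsion-free finite-index subgroup $\Gamma'\subseteq\Gamma$ (for instance, $\Gamma\cap\Gamma(3)$) so that $X(\Gamma')$ is a smooth compact connected Riemann surface, pull $\bar F$ back to it, and apply the maximum modulus principle componentwise to conclude that $\bar F$, and hence $F$, is constant.

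I expect the main point requiring care to be the verification of holomorphicity at cusps other than $\infty$, because it requires tracking how the exponent matrix interacts with $\sigma(\gamma)$ and with the cusp width of $\gamma\infty$. The argument is routine once one notes that $\sigma(T)$ diagonalizable together with the uniform cusp width $N$ in $\Gamma$ makes every local expansion take the desired form $w^{c}\tilde{F}(w^N)$ in a suitable basis.
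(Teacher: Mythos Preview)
Your approach is essentially identical to the paper's: restrict to $\Gamma = \ker\sigma$, observe that the coordinates of $F$ become $\Gamma$-invariant holomorphic functions, and then invoke the constancy of holomorphic functions on a compact Riemann surface. The paper's proof is a three-line sketch that asserts the coordinates are ``holomorphic scalar valued modular forms of weight zero'' for $\Gamma$ without justifying the cusp condition; your version supplies exactly that verification (via the uniform cusp width $N$ for the normal subgroup $\Gamma$), so in this respect your write-up is more complete than the paper's.

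One small slip: the sentence ``If $\sigma(-I_2)\neq 1$ then $F=\sigma(-I_2)F$ forces $M_0(\sigma)=0$'' is false when $\sigma$ is reducible---it only forces $F$ into the $+1$-eigenspace of $\sigma(-I_2)$. The fix is immediate (replace $\sigma$ by its even part $\sigma^+$, or simply note that $-I_2$ acts trivially on $\frakh$ so $\Gamma$-invariance of $F$ as a function on $\frakh$ is unaffected), and nothing downstream depends on it.
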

\begin{proof}
If $F$ is a modular form of weight $0$ for $\rho$ with finite image, then the coordinates of $F$ are holomorphic scalar valued modular forms of weight zero for the finite index subgroup $\ker \rho$ of $\SL_2(\ZZ)$. They thus define global holomorphic functions on a compact Riemann surface, and thus must be constant.
\end{proof}
\begin{dfn}
\label{dfn:positive}
A representation $\rho$ of $\SL_2(\ZZ)$ is said to be \emph{positive} if $M_k(\rho \otimes \chi^a) = 0$ for all integers $k < 0$ and $a = 0,\ldots, 11$.
\end{dfn}
\begin{lem}
\label{l:weightzero}
A representation $\rho$ is positive if either of the following conditions hold:
\begin{enumerate}
\item $\rho$ is good;
\item $\rho$ is unitarizable.
\end{enumerate}
\end{lem}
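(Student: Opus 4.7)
The plan is to handle the two sufficient conditions separately.

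For (1), suppose $F \in M_k(\rho \otimes \chi^a)$ is nonzero with $k < 0$ and $0 \le a \le 11$. Setting $n := -k \ge 1$, the remark immediately preceding Proposition~\ref{p:duality} shows that multiplication by $\eta^{2n} = \eta^{-2k}$ defines an injection
\[
\cVbar_k(\rho \otimes \chi^a) \hookrightarrow \cSbar_0(\rho \otimes \chi^{a-k}),
\]
so $\eta^{-2k} F \in S_0(\rho \otimes \chi^{a-k})$. By Remark~\ref{rmk:cuspFormsAreZeroForGoodReps}, goodness forces this space to vanish, whence $F = 0$.

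For (2), since $|\chi(\gamma)| = 1$ for every $\gamma \in \SL_2(\ZZ)$, unitarizability of $\rho$ implies unitarizability of $\rho' := \rho \otimes \chi^a$. Fix a $\rho'$-invariant Hermitian inner product on $V$. I would show that for every $F \in M_k(\rho')$ with $k < 0$, the $\SL_2(\ZZ)$-invariant nonnegative function
\[
u(\tau) := \|F(\tau)\|^2 \, \mathrm{Im}(\tau)^k
\]
must vanish identically. Its $\SL_2(\ZZ)$-invariance is an immediate consequence of the transformation law \eqref{eq:translaw} together with $\mathrm{Im}(\gamma\tau) = \mathrm{Im}(\tau)/|c\tau + d|^2$ and the fact that $\rho'(\gamma)$ preserves the chosen inner product.

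To control $u$ near the cusp, I would use that $\rho'(T)$ is unitary, hence diagonalizable with eigenvalues on the unit circle. In an orthonormal basis of $\rho'(T)$-eigenvectors the standard exponent matrix $L$ becomes diagonal and real, with entries $\mu_j \in [0,1)$. The coordinates of $F$ in this basis then take the form $e^{2\pi i \mu_j \tau} g_j(q)$ with $g_j$ holomorphic at $q = 0$, so $\|F(\tau)\|$ remains bounded as $\mathrm{Im}(\tau) \to \infty$, and therefore $u(\tau) \to 0$ at the cusp. The finish is a maximum principle argument: $\log \|F\|^2$ is subharmonic on $\frakh$ (a standard consequence of Cauchy--Schwarz for holomorphic vector-valued functions), and $k \log \mathrm{Im}(\tau)$ is subharmonic for $k < 0$ because $\Delta \log y = -1/y^2$, so $\log u$ is subharmonic on $\frakh$. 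If $F \not\equiv 0$, then $\SL_2(\ZZ)$-invariance plus compactness of $\cMbar^{\rm an}$ would force $u$ to achieve a positive maximum at some interior point $\tau_0 \in \frakh$; then $\log u$ attains its supremum on the connected domain $\frakh$ at $\tau_0$, and the maximum principle would force $\log u$ to be constant, contradicting $\log u \to -\infty$ at the cusp.

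The main obstacle is case (2), and specifically the coordination between the three ingredients -- unitarity of $\rho'(T)$ giving the boundedness at the cusp via the standard exponents, subharmonicity of $\log\|F\|^2$ for a holomorphic vector-valued $F$ (to be handled with some care at the zeros of $F$, where $\log\|F\|^2$ takes the value $-\infty$), and descent to the compact orbifold to invoke the maximum principle. Case (1), by contrast, is essentially a one-line reduction via $\eta^{-2k}$.
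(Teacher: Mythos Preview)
Your argument for (1) is correct and matches the paper's proof exactly: multiplication by $\eta^{-2k}$ carries $M_k(\rho\otimes\chi^a)$ into $S_0(\rho\otimes\chi^{a-k})$, and goodness kills the latter.

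For (2) the paper does not give an argument at all; it simply refers the reader to the discussion following Lemma~4.1 of \cite{KnoppMason1}. Your proposal supplies a self-contained proof via the $\SL_2(\ZZ)$-invariant quantity $u(\tau)=\|F(\tau)\|^2\,\mathrm{Im}(\tau)^k$, subharmonicity of $\log u$ (from Cauchy--Schwarz for $\log\|F\|^2$ and from $\Delta(k\log y)=-k/y^2>0$ when $k<0$), boundedness of $\|F\|$ at the cusp via the real diagonal standard exponents for the unitary operator $\rho'(T)$, and the strong maximum principle. This is the standard route and is essentially what Knopp--Mason do; the advantage of writing it out is that the reader sees exactly where unitarity enters (twice: to get an invariant norm, and to force $\rho'(T)$ diagonalizable with exponents in $[0,1)\subset\RR$ so that $\|F\|$ stays bounded). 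Your caveats about handling the $-\infty$ values of $\log\|F\|^2$ at zeros of $F$ and about passing to the compact coarse space to locate the maximum are the right things to flag; both are routine.
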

\begin{proof}
For (1) note that if $k < 0$, then multiplication by $\eta^{-2k}$ defines an injective map $M_k(\rho) \hookrightarrow S_0(\rho\otimes \chi^{-k})$. But $S_0(\rho \otimes \chi^{-k}) = 0$ if $\rho$ is good. For (2), one can consult the discussion following Lemma 4.1 of \cite{KnoppMason1}.
\end{proof}


\section{Vector bundles over weighted projective lines}
\label{Section:vbundlesWeightedProjectiveLines}

In this section we summarize a few facts about weighted projective lines that will be needed below. The material of this section is entirely independent from the rest of the paper and it is mainly due to Lennart Meier (\cite{Meier}).

Let $n_1,n_2$ be integers. The ring homomorphism
\begin{align*}
\CC[x_1,x_2] &\longrightarrow \CC[t,t^{-1}]\otimes \CC[x_1,x_2] \\
x_i &\longmapsto t^{n_i}\,x_i, \quad i=1,2,
\end{align*}
defines a group-scheme action
$$
\mu: \mathbf{G}_m \times \mathbf{A}^2 \longrightarrow \mathbf{A}^2.
$$
Let $\PP(n_1,n_2)\df \mathbf{A}^{2,\times}_{\CC}\dbls \mathbf{G}_m$ denote the quotient in the category of algebraic stacks of the action $\mu$ restricted to  the open subscheme $\mathbf{A}^{2,\times} = \mathrm{Spec}(\CC[x_1,x_2]) - \{(0,0)\}$. This quotient is called the {\em weighted projective line} with weights $n_1$ and $n_2$. It is a proper smooth algebraic stack. Note that $\PP(1,1) = \PP^1$, the usual projective line. 

A {\em vector bundle} of rank $r$ over  $\PP(n_1,n_2)$ is a $\mathbf{G}_m$-equivariant vector bundle on  $\mathbf{A}^{2,\times}$, that is, a locally free sheaf $\cV$ of rank $r$ over $\mathbf{A}^{2,\times}$ together with an isomorphism
$$
\varphi: \mathrm{pr}^*\cV \stackrel{\cong}\longrightarrow \mu^*\cV,
$$
satisfying the standard cocycle condition, where $\mathrm{pr}: \mathbf{G}_m \times \mathbf{A}^{2,\times} \longrightarrow \mathbf{A}^{2,\times}$ denotes the projection map on the second coordinate.

As is the case for $\PP^1$, the study of vector bundles over $\PP(n_1,n_2)$ is equivalent to the study of finitely generated graded modules. In particular, let $S_{n_1,n_2}$ be the graded $\CC$-algebra given by the polynomial algebra $\CC[x_1,x_2]$ where $x_1$ and $x_2$ are of degree $n_1$ and $n_2$, respectively. A vector bundle over $\PP(n_1,n_2)$ can be extended to a $\mathbf{G}_m$-equivariant coherent sheaf over the affine plane $\mathbf{A}^2_{\CC}$, and since $\mathbf{A}^2_{\CC}$ is affine with coordinate ring $\CC[x_1,x_2]$, this coherent sheaf is equivalent to a finitely generated graded $S_{n_1,n_2}$-module, which we denote by $\cV^{\sim}$. The key point, due to Lennart Meier (who in turn credits Angelo Vistoli), is to observe that  $\cV^{\sim}$ is {\em projective}: 

\begin{thm}[\cite{Meier}, Proof of Prop. 3.4]
\label{thm:equivCategoriesGradedSModules}
The functor
\begin{align*}
\mathrm{Vec}(\PP(n_1,n_2)) &\longrightarrow \mathrm{prgr}(S_{n_1,n_2}) \\
\cV &\longmapsto \cV^{\sim}
\end{align*}
is an equivalence of categories between the category of vector bundles over $\PP(n_1,n_2)$ and the category of projective, finitely generated graded $S_{n_2,n_2}$-modules. 
\end{thm}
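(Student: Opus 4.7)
The plan is to construct explicit functors in both directions and verify they are mutually inverse. Starting from a vector bundle $\cV$ on $\PP(n_1,n_2)$, which by definition is a $\mathbf{G}_m$-equivariant locally free sheaf on $\mathbf{A}^{2,\times}$, I would push it forward along the open immersion $j\colon \mathbf{A}^{2,\times} \hookrightarrow \mathbf{A}^2 = \Spec(\CC[x_1,x_2])$, equivariance being automatically preserved. Applying the standard equivalence between $\mathbf{G}_m$-equivariant quasi-coherent sheaves on an affine scheme and graded modules over its coordinate ring yields a graded $S_{n_1,n_2}$-module $\cV^\sim \df \Gamma(\mathbf{A}^{2,\times}, \cV)$. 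The quasi-inverse takes a finitely generated projective graded $S_{n_1,n_2}$-module $M$, forms the associated locally free sheaf $\widetilde M$ on $\mathbf{A}^2$, restricts to $\mathbf{A}^{2,\times}$, and descends along the $\mathbf{G}_m$-quotient to a vector bundle on $\PP(n_1,n_2)$.

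The main technical content is to verify that $\cV^\sim$ is indeed \emph{coherent} and \emph{projective}. Here the crucial geometric input is that the complement $\mathbf{A}^2 \setminus \mathbf{A}^{2,\times}$ consists only of the origin, which has codimension $2$ in the smooth surface $\mathbf{A}^2$. Hence $j_*\cV$ is a coherent reflexive sheaf on $\mathbf{A}^2$ by the usual extension-across-codimension-$2$ argument for normal schemes. Since $\mathbf{A}^2$ is regular of dimension $2$, a reflexive finitely generated module has depth $2$ at the maximal ideal $(x_1,x_2)$, so Auslander--Buchsbaum forces its projective dimension to be zero. Therefore $\cV^\sim$ is a finitely generated projective module, and the grading is inherited from the $\mathbf{G}_m$-equivariance. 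Conversely, for $M$ projective, $\widetilde{M}$ is locally free on $\mathbf{A}^2$, so its restriction to $\mathbf{A}^{2,\times}$ is a $\mathbf{G}_m$-equivariant vector bundle, descending to a vector bundle on $\PP(n_1,n_2)$.

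For the quasi-inverse property, the restriction $j^*j_*\cV \cong \cV$ is immediate. In the other direction, one needs $\Gamma(\mathbf{A}^{2,\times}, \widetilde{M}) = M$ for projective $M$, which again follows from the depth-$2$ condition: no extra sections appear upon removing the origin. Functoriality and compatibility with the $\mathbf{G}_m$-equivariant/graded dictionary then complete the proof. The hard part, and the real content of the theorem, is the projectivity assertion: on higher-dimensional weighted projective stacks one only obtains reflexive graded modules, and it is precisely the two-dimensional regular setting of $\mathbf{A}^2$ that collapses reflexive into projective. Verifying the depth/regularity interplay carefully, while keeping track of equivariance throughout, is the step I expect to be the principal obstacle.
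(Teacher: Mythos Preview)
The paper does not supply its own proof of this statement; it simply attributes the result to Meier (who in turn credits Vistoli) and states it without argument. Your proposal is therefore being compared against the cited source rather than against anything in the present paper.

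Your argument is correct and is essentially the standard one. The functor $\cV \mapsto j_*\cV$ lands in coherent reflexive sheaves because the origin has codimension $2$ in the smooth surface $\mathbf{A}^2$, and then the Auslander--Buchsbaum formula over the $2$-dimensional regular ring $\CC[x_1,x_2]$ forces reflexive to coincide with projective. The $\mathbf{G}_m$-equivariance/grading dictionary and the check that $j^*j_*\cV \cong \cV$ and $\Gamma(\mathbf{A}^{2,\times},\widetilde M) \cong M$ for projective $M$ are routine, exactly as you indicate. Your closing remark that in higher dimensions one only obtains reflexive (not projective) graded modules is also on point and explains why the result is specific to weighted projective \emph{lines}. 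There is no gap here; this is precisely the Meier--Vistoli argument the paper is invoking.
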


Let $M$ be a graded $S_{n_1,n_2}$-module. For any integer $i\geq 0$, let $M[i]$ denote the homogeneous component of degree $i$ in $M$. For any integer $k\in \ZZ$, let  $M(k)$ be the graded $S_{n_1,n_2}$-module given by $M$, but with grading given by $M(k)[i] = M[i+k]$. 

\begin{dfn}
For any $k\in\ZZ$, the line bundle $\cO(k)$ over $\PP(4,6)$ is the unique line bundle such that $\cO(k)^{\sim} = S_{n_1,n_2}(k)$, where $\cV \mapsto \cV^{\sim}$ is the functor of Theorem \ref{thm:equivCategoriesGradedSModules}.
\end{dfn}

Theorem \ref{thm:equivCategoriesGradedSModules} implies:

\begin{thm}[\cite{Meier}, Prop. 3.4]
\label{thm:DecompositionTheorem}
Any vector bundle $\cV$ of rank $n$ over $\PP(n_1,n_2)$ decomposes as $\mathcal{V} \cong \bigoplus_{i=1}^n \cO(a_i)$ for uniquely determined integers $a_1,\ldots,a_n \in \ZZ$ with $a_1 \geq a_2 \geq \cdots \geq a_n$.  
\end{thm}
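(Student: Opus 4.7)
My plan is to pull the statement across the equivalence of Theorem \ref{thm:equivCategoriesGradedSModules} and then prove the corresponding algebraic statement: every finitely generated projective graded module over $S_{n_1,n_2}$ is free. Concretely, given a rank $n$ vector bundle $\cV$ on $\PP(n_1,n_2)$, the associated module $M \df \cV^\sim$ is a finitely generated projective graded $S_{n_1,n_2}$-module, and it suffices to exhibit a graded isomorphism $M \cong \bigoplus_{i=1}^n S_{n_1,n_2}(a_i)$ for uniquely determined integers $a_i$; applying the inverse of the functor of Theorem \ref{thm:equivCategoriesGradedSModules} then yields $\cV \cong \bigoplus_{i=1}^n \cO(a_i)$.

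To establish freeness I would use graded Nakayama. Let $\frakm \df (x_1,x_2)$ denote the irrelevant ideal of $S_{n_1,n_2}$, noting that $S_{n_1,n_2}$ is a connected positively graded $\CC$-algebra with $S_{n_1,n_2}/\frakm \cong \CC$. The quotient $M/\frakm M$ is then a finite-dimensional graded $\CC$-vector space; pick a homogeneous $\CC$-basis and lift to homogeneous elements $m_1,\ldots,m_r \in M$ of degrees $-a_1,\ldots,-a_r$. These produce a graded map $\varphi \colon F \df \bigoplus_{i=1}^r S_{n_1,n_2}(a_i) \to M$ which is surjective by graded Nakayama. Since $M$ is projective, the short exact sequence $0 \to \ker\varphi \to F \to M \to 0$ splits, so $\ker\varphi$ is a graded direct summand of $F$; in particular $\ker\varphi/\frakm\ker\varphi$ injects into $F/\frakm F \twoheadrightarrow M/\frakm M$, which is an isomorphism by construction. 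Hence $\ker\varphi/\frakm\ker\varphi = 0$, and applying graded Nakayama again gives $\ker\varphi = 0$. Thus $M \cong F$ is graded free, and $r = n$ follows by comparing ranks of the associated vector bundles (equivalently, by localizing at the generic point). Reordering so that $a_1 \geq a_2 \geq \cdots \geq a_n$ is harmless.

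For uniqueness of the multiset $\{a_i\}$, observe that the graded Hilbert series of $M$ over $S_{n_1,n_2}$ determines, and is determined by, the tuple $(a_1,\ldots,a_n)$, since
\[
\sum_i \dim_\CC (M/\frakm M)_{-a} \cdot t^{-a} = \sum_i t^{-a_i}
\]
in the graded ring of Laurent polynomials, and the left-hand side is an invariant of $M$ (hence of $\cV$).

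The main obstacle is really just the freeness step — graded Nakayama makes it routine once one observes that $S_{n_1,n_2}$ is connected graded in the sense that $(S_{n_1,n_2})_0 = \CC$. (This is where the hypothesis that the weights are positive implicitly enters; if either $n_i \leq 0$ the argument would need to be adjusted, but this is not a concern for the intended application to $\PP(4,6)$.) No further input from stacks is needed beyond the equivalence of Theorem \ref{thm:equivCategoriesGradedSModules}, which is where Meier and Vistoli's projectivity result does the real work.
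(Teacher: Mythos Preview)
Your proposal is correct and matches the paper's approach: the paper simply states that Theorem~\ref{thm:DecompositionTheorem} is implied by Theorem~\ref{thm:equivCategoriesGradedSModules} (citing Meier), and you have supplied the standard graded-Nakayama argument that turns ``projective over a connected positively graded $\CC$-algebra'' into ``graded free,'' which is precisely the content that the paper and Meier's argument rely on. One cosmetic point: in your displayed uniqueness identity the outer sum on the left should run over degrees $a$ rather than the index $i$, but the intended meaning is clear and the argument is sound.
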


A useful consequence of Theorem \ref{thm:DecompositionTheorem} is that the cohomology of $\cV$ can be computed in terms of the cohomology of the $\cO(a_i)$'s, which is well-known:

\begin{prop}[\cite{Meier}, \S 2]
\label{prop:cohomologyOfLineBundles}
For any $k\in\ZZ$, we have:
\begin{itemize}
\item[(i)] $H^0( \PP(n_1,n_2), \cO(k)) \cong \bigoplus_{(a,b)\in I_0} \CC\,x_1^a\,x_2^b $, where  $$I_0 = \{(a,b)\in \ZZ_{\geq 0}\times\ZZ_{\geq 0}:an_1 + bn_2 = k\}.$$  
\item[(ii)] $H^1( \PP(n_1,n_2), \cO(k)) \cong \bigoplus_{(c,d)\in I_1} \CC\,x_1^c\,x_2^d $, where  $$I_1 = \{(c,d)\in \ZZ_{< 0}\times\ZZ_{< 0}:cn_1 + dn_2 = k\}.$$  
\item[(iii)] $H^i( \PP(n_1,n_2), \cO(k)) = 0$, for all $i \geq 2$. 
\end{itemize}
\end{prop}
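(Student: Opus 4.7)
The plan is to compute both cohomology groups via Čech cohomology with respect to the open cover $\PP(n_1,n_2) = U_1 \cup U_2$, where $U_i$ is the open substack obtained as the image of $D(x_i) \subset \mathbf{A}^{2,\times}$ under the quotient map. Each $U_i$ is the stack-theoretic quotient of an affine scheme by the linearly reductive group $\mathbf{G}_m$, so $\Gamma(U_i,-)$ is exact on quasi-coherent sheaves; the $U_i$ are therefore acyclic for $\cO(k)$, and Čech cohomology with respect to $\{U_1,U_2\}$ agrees with sheaf cohomology. Since this cover has only two members, $\check{H}^i$ vanishes automatically for $i \geq 2$, which proves part (iii).

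To pin down $H^0$ and $H^1$, I would identify sections explicitly. For any $\mathbf{G}_m$-invariant open $U \subset \mathbf{A}^{2,\times}$, the sections of $\cO(k)$ on $[U/\mathbf{G}_m]$ are exactly the degree-$k$ elements of $\cO(U)$ under the grading $\deg(x_i) = n_i$. Applied to $U_1 = D(x_1)$, $U_2 = D(x_2)$, and $U_1\cap U_2 = D(x_1x_2)$, with coordinate rings $\CC[x_1^{\pm 1},x_2]$, $\CC[x_1,x_2^{\pm 1}]$, and $\CC[x_1^{\pm 1},x_2^{\pm 1}]$, this yields natural monomial bases: $\Gamma(U_1,\cO(k))$ is spanned by $x_1^a x_2^b$ with $a\in\ZZ$, $b\geq 0$, and $an_1+bn_2 = k$; $\Gamma(U_2,\cO(k))$ is the analogous span with the roles of $a$ and $b$ reversed; and $\Gamma(U_1\cap U_2,\cO(k))$ is indexed by all $(a,b)\in\ZZ^2$ with $an_1+bn_2=k$.

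Parts (i) and (ii) would then follow by reading off the kernel and cokernel of the Čech differential $(f_1,f_2) \mapsto f_1|_{U_1\cap U_2} - f_2|_{U_1\cap U_2}$. Its kernel consists of pairs of sections that agree on the intersection, i.e.\ Laurent monomials lying simultaneously in both $\Gamma(U_1,\cO(k))$ and $\Gamma(U_2,\cO(k))$; these are exactly the $x_1^a x_2^b$ with $a,b\geq 0$, proving (i). Its cokernel is the quotient of $\Gamma(U_1\cap U_2,\cO(k))$ by the subspace spanned by monomials with $a\geq 0$ or $b\geq 0$, which leaves precisely the monomials satisfying both $a<0$ and $b<0$, proving (ii). The only point requiring real care is justifying acyclicity of the $U_i$ in the stacky setting, but this is standard for good quotients by linearly reductive groups (alternatively, each $U_i$ is a finite group quotient of $\mathbf{A}^1$ by $\mu_{n_i}$); once that is granted, the rest is monomial bookkeeping.
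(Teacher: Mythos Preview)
The paper does not supply its own proof of this proposition; it is stated with a citation to \cite{Meier}, \S 2, and used as input. So there is no in-paper argument to compare against.

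Your argument is correct and is the standard one. The two-piece cover by $U_i = [D(x_i)/\mathbf{G}_m]$ is an acyclic cover for quasi-coherent sheaves because each $D(x_i)$ is affine and $\mathbf{G}_m$ is linearly reductive (equivalently, as you note, $U_i \cong [\mathbf{A}^1/\mu_{n_i}]$ with $\mu_{n_i}$ finite over $\CC$), so \v{C}ech cohomology computes sheaf cohomology and (iii) is immediate. Your identification of $\Gamma(U,\cO(k))$ with the degree-$k$ part of $\cO_{\mathbf{A}^{2,\times}}(U)$ is exactly what the equivariant description of $\cO(k)$ gives, and the monomial bookkeeping for kernel and cokernel is straightforward and yields (i) and (ii). Nothing is missing.
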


Proposition \ref{prop:cohomologyOfLineBundles} allows one to deduce a relationship between the cohomology of $\cV$ and that of its dual $\cV^{\vee}$, as follows: 
\begin{prop}[Weak Serre Duality]
\label{prop:weakSerreDuality}
Let $\cV$ be a vector bundle over $\PP(n_1,n_2)$. Then 
$$
h^0(\cV) = h^1(\cV^{\vee}\otimes\cO(-n_1-n_2)).
$$
\end{prop}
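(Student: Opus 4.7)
The plan is to reduce to the case of line bundles using the Birkhoff–Grothendieck-style decomposition (Theorem \ref{thm:DecompositionTheorem}), and then verify the identity termwise using the explicit cohomology description in Proposition \ref{prop:cohomologyOfLineBundles}.

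First I would write $\cV \cong \bigoplus_{i=1}^n \cO(a_i)$ by Theorem \ref{thm:DecompositionTheorem}. Since dualizing and tensoring with a line bundle are additive on direct sums, one has
\[
  \cV^\vee \otimes \cO(-n_1-n_2) \cong \bigoplus_{i=1}^n \cO(-a_i - n_1 - n_2).
\]
Since cohomology also commutes with finite direct sums, it suffices to prove, for each integer $k$, the line bundle identity
\[
  h^0(\PP(n_1,n_2), \cO(k)) = h^1(\PP(n_1,n_2), \cO(-k-n_1-n_2)).
\]

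For the main step, I would apply Proposition \ref{prop:cohomologyOfLineBundles} to both sides. The left side counts pairs $(a,b)\in \ZZ_{\geq 0}\times\ZZ_{\geq 0}$ with $an_1 + bn_2 = k$, while the right side counts pairs $(c,d)\in \ZZ_{<0}\times \ZZ_{<0}$ with $cn_1 + dn_2 = -k-n_1-n_2$. The natural bijection is
\[
  (a,b) \longmapsto (c,d) \df (-a-1,\,-b-1),
\]
whose inverse is $(c,d)\mapsto (-c-1,-d-1)$. This map sends $\ZZ_{\geq 0}^{\,2}$ bijectively onto $\ZZ_{<0}^{\,2}$, and a direct check shows
\[
  cn_1 + dn_2 = -(a+1)n_1 - (b+1)n_2 = -(an_1 + bn_2) - n_1 - n_2 = -k - n_1 - n_2,
\]
so the bijection restricts to one between the two indexing sets. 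This gives the desired equality of dimensions.

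There is no real obstacle here: the whole statement is a bookkeeping consequence of the explicit cohomology formula together with the decomposition theorem, with the only content being the elementary bijection $(a,b)\leftrightarrow(-a-1,-b-1)$ between lattice points in the closed first quadrant and those in the strictly negative quadrant. The hypothesis $-n_1-n_2$ in the twist is precisely what is forced by this shift, reflecting that $\cO(-n_1-n_2)$ plays the role of the dualizing sheaf on $\PP(n_1,n_2)$.
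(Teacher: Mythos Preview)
Your proof is correct and follows essentially the same approach as the paper: decompose $\cV$ into line bundles via Theorem~\ref{thm:DecompositionTheorem} and reduce to the identity $h^0(\cO(k)) = h^1(\cO(-k-n_1-n_2))$ via Proposition~\ref{prop:cohomologyOfLineBundles}. The paper simply declares this last identity ``clear'' from Proposition~\ref{prop:cohomologyOfLineBundles}, whereas you spell out the bijection $(a,b)\leftrightarrow(-a-1,-b-1)$ explicitly.
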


\begin{proof}
By Theorem \ref{thm:DecompositionTheorem}, we can write $\mathcal{V} \cong \bigoplus_{i=1}^r \cO(a_i)$ and $\mathcal{V}^{\vee} \cong \bigoplus_{i=1}^r \cO(-a_i)$. Now it is clear by Proposition \ref{prop:cohomologyOfLineBundles} that $h^0(\cO(k)) = h^1(\cO(-k-n_1-n_2))$ for any $k\in\ZZ$, and the result thus follows by applying this identity to each component $\cO(a_i)$.
\end{proof}

\begin{rmk}
The expert reader will notice that $\cO(-n_1-n_2)$ is the canonical bundle of $\PP(n_1,n_2)$, as follows for example by the weighted Euler sequence for $\PP(n_1,n_2)$ (\cite{Edidin}, 4.2.1). Therefore Proposition \ref{prop:weakSerreDuality} should just be a manifestation of `Serre duality for weighted projective lines'. However, we could not find a reference in the literature for such statement, and we therefore chose to prove it in this very weak form. 
\end{rmk}

In light of Theorem \ref{thm:equivCategoriesGradedSModules}, the statement of Theorem \ref{thm:DecompositionTheorem} is also equivalent to the following:

\begin{thm}
\label{thm:FreeModuleTheoremWeightedProjectiveStack}
Let $\cV$ be a vector bundle of rank $n$ over $\PP(n_1,n_2)$. Then $\cV^{\sim} \cong \bigoplus_{i=1}^n S_{n_1,n_2}(a_i)$ is a free $S_{n_1,n_2}$-module of rank $n$.
\end{thm}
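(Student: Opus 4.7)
The plan is to deduce this immediately by combining Theorem \ref{thm:DecompositionTheorem} with the equivalence of categories of Theorem \ref{thm:equivCategoriesGradedSModules}. Concretely, given a rank $n$ vector bundle $\cV$ on $\PP(n_1,n_2)$, Theorem \ref{thm:DecompositionTheorem} furnishes a decomposition $\cV \cong \bigoplus_{i=1}^{n} \cO(a_i)$ for some integers $a_1 \geq \cdots \geq a_n$. Applying the functor $\cV \mapsto \cV^{\sim}$ to both sides and using the definition $\cO(k)^{\sim} = S_{n_1,n_2}(k)$ yields $\cV^{\sim} \cong \bigoplus_{i=1}^{n} S_{n_1,n_2}(a_i)$, which is manifestly free of rank $n$.

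The only point that requires a moment's thought is the compatibility of the functor $\cV \mapsto \cV^{\sim}$ with direct sums. This follows from the fact that the functor is an equivalence of $\CC$-linear abelian categories (Theorem \ref{thm:equivCategoriesGradedSModules}), and any additive equivalence automatically preserves finite biproducts; so there is no real obstacle here. Alternatively, one may simply observe that the assignment $\cV \mapsto \cV^{\sim}$ is constructed by first extending across the origin to a $\GG_m$-equivariant coherent sheaf on $\mathbf{A}^2$ and then taking global sections, each of which is an additive operation.

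There is essentially no hard step: once the equivalence of Theorem \ref{thm:equivCategoriesGradedSModules} is in hand, the content of Theorem \ref{thm:FreeModuleTheoremWeightedProjectiveStack} is just a translation of Theorem \ref{thm:DecompositionTheorem} across that equivalence, as the introductory sentence of the statement already signals. The conceptual work is concentrated in the decomposition theorem itself (which is cited from \cite{Meier}) and in establishing the equivalence with projective graded $S_{n_1,n_2}$-modules.
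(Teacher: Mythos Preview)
Your proposal is correct and matches the paper's approach exactly: the paper simply states that, in light of the equivalence of Theorem \ref{thm:equivCategoriesGradedSModules}, Theorem \ref{thm:FreeModuleTheoremWeightedProjectiveStack} is equivalent to Theorem \ref{thm:DecompositionTheorem}, and gives no further argument. Your additional remark on additivity of the functor $\cV\mapsto\cV^{\sim}$ is a reasonable clarification but not something the paper spells out.
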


\section{Roots and the free-module theorem}
The modular orbifold $\cMbar^{\rm an}$ is the analytification of the moduli stack $\cMbar$ of generalized elliptic curves, which is a smooth and proper algebraic stack. Moreover, there is a well-known isomorphism of algebraic stacks (e.g. \cite{Meier}, Example 2.4) $\cMbar \cong \PP(4,6)$. By GAGA for proper algebraic stacks (\cite{Toen}, \S 5.2), the analytification functor
\begin{align*}
\mathrm{Coh}(\cMbar) &\longrightarrow \mathrm{Coh}(\cMbar^{\rm{an}}) \\
\cF &\longmapsto \cF^{\rm an}
\end{align*}
between the corresponding categories of coherent sheaves induces an equivalence of categories, such that
$$
H^i(\cMbar, \cV) = H^i(\cMbar^{\rm{an}}, \cV^{\rm an}).
$$ 
In particular, to each vector bundle $\cVbar_{k,L}(\rho)$ over $\cMbar^{\rm{an}}$ we can associate a vector bundle $\cV$ over $\cMbar \cong \PP(4,6)$, whose analytification is $\cVbar_{k,L}(\rho)$, and with identical cohomology. Since we are only interested in cohomological computations, there is no harm in denoting the (algebraic) vector bundle $\cV$ over $\cMbar$ also by $\cVbar_{k,L}(\rho)$. 

\begin{ex}
\label{ex:modularFormsCorrespondToO(k)}
If $\cVbar_{k,L}(\rho) = \cVbar_{k,0}(\mathbf{1}) = \cLbar_k$, then the corresponding line bundle over $\PP(4,6)$ is just $\cO(k)$. Similarly  $\cSbar_{k}(\mathbf{1})$, the line bundle of weight $k$ cusp forms, corresponds to $\cO(k-12)$. 
\end{ex}

The machinery of Section \ref{Section:vbundlesWeightedProjectiveLines} may thus be applied to the study of the vector bundles $\cVbar_{k,L}(\rho)$ attached to a representation $\rho:\SL_2(\ZZ)\rightarrow \GL(V)$. In particular, Theorem \ref{thm:DecompositionTheorem} gives a decomposition 
\begin{equation}
\label{eqn:rootEquation}
\cVbar_0(\rho) \cong \bigoplus_{i=1}^{d=\dim \rho} \cLbar_{a_i}, 
\end{equation}
for uniquely determined integers $a_1\geq a_2 \geq \ldots \geq a_d$, which depend on the representation $\rho$ only. 

\begin{dfn}
\label{dfn:roots}
The integers $a_1\geq a_2 \geq \ldots \geq a_d$ of \eqref{eqn:rootEquation} are called the {\em roots} of $\rho$. 
\end{dfn} 

The roots of $\rho$ entirely determine the cohomology of $\cVbar_k(\rho)$ for all integers $k$. 

\begin{ex}
If $\rho = \chi^a$ is a character, $a=0,\ldots,11$, then Theorem \ref{thm:characters} gives $\cVbar_{0}(\chi^a) \cong \cO(-a)$, so the only root is $a_1=-a$. In particular,
$$
h^i(\cVbar_k(\chi^a)) = h^i(\cO(k-a)), \quad i=1,2,
$$
so dimension formulas for the spaces $M_k(\chi^a)$ can be read off from Proposition \ref{prop:cohomologyOfLineBundles} with $n_1=4$ and $n_2=6$. 
\end{ex}

Finding the roots of $\rho$ can be harder in higher rank, and the issue will be addressed more properly in Section \ref{s:DimensionFormulae} below. There are however some very general restrictions on the roots which are easy to derive. For example, note that since $\rho(S)$ (resp. $\rho(R)$) is of order 4 (resp. 6), its eigenvalues will be of the form $i^s$ (resp. $\xi^r$, $\xi = e^{2\pi i/6}$) for $s=0,\ldots,3$ (resp. $r=0,\ldots,5$). The multiplicities of these eigenvalues give restrictions on the roots of $\rho$ as follows:

\begin{thm}
\label{thm:rootCongruences}
 Let $\alpha_s$ (resp. $\beta_r$) be the multiplicity of the eigenvalue $i^s$ (resp. $\xi^r$) of $\rho(S)$ (resp. $\rho(R)$). Then precisely $\alpha_s$ roots of $\rho$ are congruent to $s$ modulo 4 and precisely $\beta_r$ roots of $\rho$ are congruent to $r$ modulo 6.
\end{thm}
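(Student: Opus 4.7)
The plan is to localize the decomposition \eqref{eqn:rootEquation} at the two elliptic points of $\cMbar^{\rm an}$ and read off the stabilizer-representation structure on the resulting fibers. Since any two extensions of $\cV_0(\rho)$ to $\cMbar^{\rm an}$ differ only over a neighborhood of the cusp $\infty$, the given isomorphism
\[
  \cVbar_0(\rho) \;\cong\; \bigoplus_{i=1}^d \cLbar_{a_i}
\]
restricts to an isomorphism $\cV_0(\rho)\cong \bigoplus_{i=1}^d \cL_{a_i}$ over $\cM^{\rm an}$. In particular, it may be evaluated at the two elliptic points $\tau_S = i$ and $\tau_R = e^{2\pi i/3}$, whose $\SL_2(\ZZ)$-stabilizers are $\langle S\rangle\cong C_4$ and $\langle R\rangle\cong C_6$ respectively. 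The theorem will follow once we observe that both sides of this restricted isomorphism carry a canonical action of the stabilizer on the fiber over each elliptic point, and that these actions must agree.

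First I would compute the stabilizer action on the right-hand side. From \eqref{eqn:modularFormsLineBundle}, the automorphy factor at $\tau_S = i$ under $S = \stwomat{0}{-1}{1}{0}$ is $c\tau_S + d = i$, so $S$ acts on the fiber $(\cL_k)_{\tau_S}$ by multiplication by $i^k$. Hence $S$ acts on the right-hand side by $\diag(i^{a_1},\ldots, i^{a_d})$. On the left-hand side, formula \eqref{eqn:vvaluedAction} with $k=0$ shows that $\langle S\rangle$ acts on $(\cV_0(\rho))_{\tau_S}\cong V$ through $\rho(S)$. Comparing characters of the two resulting $C_4$-representations forces the eigenvalues of $\rho(S)$ (with multiplicity) to be $i^{a_1},\ldots,i^{a_d}$, which is exactly the assertion $\alpha_s = \#\{j : a_j \equiv s \pmod 4\}$.

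The argument at $\tau_R = e^{2\pi i/3}$ is entirely parallel; the only new computation is the automorphy factor for $R = \stwomat{0}{-1}{1}{1}$, namely $\tau_R + 1 = \tfrac{1}{2} + \tfrac{\sqrt{3}}{2}i = e^{2\pi i/6} = \xi$. Consequently $R$ acts on $(\cL_k)_{\tau_R}$ as $\xi^k$ and on $(\cV_0(\rho))_{\tau_R}$ as $\rho(R)$, and matching characters yields the congruence $\beta_r = \#\{j : a_j \equiv r \pmod 6\}$.

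No serious obstacle is expected here: once the two automorphy factors at the elliptic points have been correctly identified, the theorem reduces to the tautology that an isomorphism of vector bundles induces an isomorphism of stabilizer representations on stalks at stacky points. A reader who prefers the purely algebraic picture may equivalently transport the decomposition across $\cMbar\cong\PP(4,6)$ and use the fact that the inertia groups $\mu_4$ and $\mu_6$ at the stacky points $[1:0]$ and $[0:1]$ act on $\cO(a)$ through the characters $a \bmod 4$ and $a\bmod 6$, respectively; the same character comparison then produces the same two congruences.
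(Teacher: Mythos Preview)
Your proposal is correct and follows essentially the same approach as the paper: restrict the decomposition $\cVbar_0(\rho)\cong\bigoplus_j\cLbar_{a_j}$ to the elliptic points $i$ and $\zeta=e^{2\pi i/3}$, identify the induced stabilizer action on the fiber of $\cV_0(\rho)$ with $\rho(S)$ (resp.\ $\rho(R)$) and on the fiber of $\cLbar_{a_j}$ with $i^{a_j}$ (resp.\ $\xi^{a_j}$), and compare. Your write-up is in fact slightly more explicit than the paper's, since you compute the automorphy factors $c\tau+d$ at each elliptic point directly, whereas the paper simply asserts the action of $S$ and $R$ on $\kappa(i)^*\cLbar_{a_j}$ and $\kappa(\zeta)^*\cLbar_{a_j}$.
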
 

\begin{proof}
The point $i\in \frakh$ descends to a geometric point $\kappa(i): \Spec(\CC) \rightarrow \cMbar$ whose stabilizer is cyclic of order 4, generated by $S$. The vector bundle $\kappa(i)^*\cVbar_{L,0}(\rho)$ is just a copy of the vector space $V$ together with the action of the cyclic group generated by $\rho(S)$. Now the root decomposition gives an isomorphism $\cVbar_{0}(\rho) \simeq \bigoplus_{j=1}^d \cLbar_{a_j}$, and since the action of $S$ on $\kappa(i)^*\cLbar_{a_j}$ is given by $i^{a_j}$, we deduce that the action of $\rho(S)$ on $V$ can be diagonalized as $\rho(S)\sim \diag(i^{a_1},\ldots,i^{a_d})$. The same reasoning applies to the geometric point $\kappa(\zeta): \Spec(\CC) \rightarrow \cMbar$, where $\zeta = e^{2\pi i/3}$, whose stabilizer is cyclic of order 6, generated by $R$.      
\end{proof}

Another consequence of viewing $\cVbar_{k,L}(\rho)$ as vector bundles over $\PP(4,6)$ is that Theorem  \ref{thm:FreeModuleTheoremWeightedProjectiveStack}, applied to $n_1=4,n_2=6$ and $\cV = \cVbar_0(\rho)$, implies the well-known \emph{free-module theorem} for vector valued modular forms. In particular, the statement below generalizes (in the case of integral weights) that of \cite{Gannon}, Theorem 3.4, to arbitrary representations $\rho$.  

\begin{thm}
\label{thm:FreeModuleTheorem}
Let $\rho:\SL_2(\ZZ)\rightarrow \GL_n(\CC)$ be a representation and let
\[
  M(\rho) \df \bigoplus_{k \in \ZZ} H^0\left(\cMbar,\cVbar_k(\rho)\right)
\]
denote the corresponding module of holomorphic vector valued modular forms for $\rho$. Then
\begin{itemize}
\item[(i)] $M(\rho)$ is a free module of rank $n$ over $M(1)$, the ring of holomorphic scalar-valued modular forms of level one.
\item[(ii)] Let $k_1\leq \ldots \leq k_n$ be the weights of the free generators. Then, using the notation of Theorem \ref{thm:rootCongruences}, precisely $\alpha_s$ (resp. $\beta_r$) of these weights are congruent to $-s$ mod 4 (resp. $-r$ mod 6). Moreover, $\sum_j k_j = 12\Tr(L)$.   
\end{itemize}
\end{thm}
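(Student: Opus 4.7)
The plan is to apply Theorem \ref{thm:FreeModuleTheoremWeightedProjectiveStack} to $\cVbar_0(\rho)$, combined with Proposition \ref{prop:compatibilityOfExtensions}, and then to translate the resulting statement about graded $S_{4,6}$-modules into one about graded $M(1)$-modules. The translation uses the identification $M(1) = \bigoplus_{k \in \ZZ} H^0(\cMbar, \cLbar_k) \cong S_{4,6}$ under which $E_4$ and $E_6$ correspond to the generators of $S_{4,6}$ in degrees $4$ and $6$ (as in Example \ref{ex:modularFormsCorrespondToO(k)}).

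First I would write $\cVbar_0(\rho) \cong \bigoplus_{i=1}^n \cLbar_{a_i}$ using the root decomposition \eqref{eqn:rootEquation}. By Proposition \ref{prop:compatibilityOfExtensions}, tensoring with $\cLbar_k$ yields $\cVbar_k(\rho) \cong \bigoplus_{i=1}^n \cLbar_{a_i + k}$, so that
\[
M(\rho) = \bigoplus_{k \in \ZZ} H^0(\cMbar, \cVbar_k(\rho)) \cong \bigoplus_{i=1}^n \bigoplus_{k \in \ZZ} H^0(\cMbar, \cLbar_{a_i + k}) \cong \bigoplus_{i=1}^n M(1)(a_i)
\]
as graded $M(1)$-modules, where $M(1)(a_i)$ denotes $M(1)$ with its grading shifted so that the unit lies in weight $-a_i$. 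This proves (i) and identifies the weights of a system of free generators as $k_i = -a_i$; the ordering $k_1 \leq \cdots \leq k_n$ matches the convention $a_1 \geq \cdots \geq a_n$ of Definition \ref{dfn:roots}.

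For (ii), the congruence statements follow immediately from Theorem \ref{thm:rootCongruences} upon substituting $k_i = -a_i$. For the sum formula I would compute $\det \cVbar_0(\rho)$ in two ways: on the one hand, the root decomposition gives $\det \bigoplus_{i=1}^n \cLbar_{a_i} \cong \cLbar_{\sum_i a_i}$; on the other hand, Proposition \ref{p:functorialproperties}(ii) applied with $k=0$ gives $\det \cVbar_{0,L}(\rho) \cong \cLbar_{-12 \Tr L}$. Since $\Pic(\cMbar) \cong \ZZ$ is generated by $\cLbar_1$, comparing these two expressions forces $\sum_i a_i = -12 \Tr L$, and hence $\sum_i k_i = 12 \Tr L$.

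No genuine obstacle arises: with the equivalence $\cMbar \cong \PP(4,6)$ and Meier's decomposition already in place, the proof is essentially a bookkeeping exercise translating between vector bundles on $\PP(4,6)$ and graded modules over $M(1)$. The only subtlety worth flagging is the grading-shift convention of Section \ref{Section:vbundlesWeightedProjectiveLines}, which one must track carefully to see that the free generators sit in weights $-a_i$ rather than $a_i$; this sign is precisely what turns Proposition \ref{p:functorialproperties}(ii) into the identity $\sum_i k_i = 12 \Tr L$ rather than its negative.
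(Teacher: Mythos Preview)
Your proposal is correct and follows essentially the same route as the paper's proof. The only cosmetic difference is that the paper phrases part (i) via the functor $\cV \mapsto \cV^{\sim}$ of Theorem~\ref{thm:equivCategoriesGradedSModules} (identifying $\cVbar_0(\rho)^{\sim}$ directly with $M(\rho)$ over $S_{4,6}\cong M(1)$), whereas you tensor the root decomposition \eqref{eqn:rootEquation} by $\cLbar_k$ and take global sections; for part (ii) both arguments invoke Theorem~\ref{thm:rootCongruences} and Proposition~\ref{p:functorialproperties}(ii) in the same way.
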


\begin{proof}
By Theorem \ref{thm:FreeModuleTheoremWeightedProjectiveStack} with $n_1=4$ and $n_2=6$ we know that $\cVbar_0(\rho)^{\sim} \cong \bigoplus_{i=1}^n S_{4,6}(a_i)$ is free of rank $n$ over $S_{4,6} \cong M(1)$. Now the line bundle $\cLbar_k$ over $\cMbar^{\rm an}$ corresponds to the line bundle $\cO(k)$ over $\PP(4,6)$, as in Example \ref{ex:modularFormsCorrespondToO(k)}. Thus for all $a_i$ we have
$$
S_{4,6}(a_i) \cong \bigoplus_{k \in \ZZ} H^0\left(\PP(4,6),\cO(k+a_i)\right) \cong \bigoplus_{k \in \ZZ} H^0\left(\cMbar,\cLbar_{k+a_i}\right)
$$
and therefore $\cVbar_{0,L}(\rho)^{\sim} \cong M(\rho)$, which proves part (i). To prove (ii) it suffices to note that $k_j = -a_j$ and then apply Theorem \ref{thm:rootCongruences} and Proposition \ref{p:functorialproperties} part (2).
\end{proof}

\begin{rmk}
The same proof shows that a corresponding free-module theorem is also true for every vector bundle $\cVbar_{0,L}(\rho)$, not just the one obtained from a standard choice of exponents. In particular, the graded module of holomorphic cusp forms is also free of rank $\dim \rho$.
\end{rmk}
\begin{rmk}
In \cite{Gannon}, \S 3.4, Gannon points out that the free-module theorem is proved in \cite{EichlerZagier}, although it is not stated as above.
\end{rmk}

\section{Dimension formulae}
\label{s:DimensionFormulae}

The Riemann-Roch Theorem for weighted projective lines (\cite{Edidin}, 4.2.5) allows one to compute the Euler characteristics of the vector bundles $\cVbar_{k,L}(\rho)$. In many cases, this is enough to obtain a dimension formula for these spaces of vector valued modular forms. Whenever a dimension formula is available, one can use it to compute the roots of $\rho$, in the sense of Definition \ref{dfn:roots}. This section explains these computations, and then several examples are illustrated in Section \ref{s:examples}.

To state the relevant formulas for the Euler characteristic, again consider the weighted projective line $\PP(4,6)$, and for simplicity let $X\df\mathbf{A}^{2,\times}_{\CC}$. For $h\in \mathbf{G}_m$, we can consider the locus $X^h$ of points that are fixed by $h$. In particular, we have
\[X^h =\begin{cases}
 X &h=\pm 1, \\
\{(x,y)\in X: y=0 \}\cong \CC^{\times} &h=\pm i, \\
\{(x,y)\in X: x=0 \}\cong \CC^{\times} &h=\zeta^{\pm 1}\textrm{ or } h= \xi^{\pm 1} \\
\emptyset &\text{otherwise},
\end{cases}\]
where $\zeta = e^{2\pi i/3}$ and $\xi= e^{2\pi i/6}$. The action of $\mathbf{G}_m$ restricts to $X^h$, and for each $h$ we may take the corresponding quotient in the category of stacks:
\[
X^h\dbls\mathbf{G}_m \cong \begin{cases} 
\PP(4,6) & h=\pm 1, \\
B\mu_4  &h=\pm i, \\
B\mu_6 &h=\zeta^{\pm 1} \textrm{ or } h= \xi^{\pm 1},\\
\emptyset &\text{otherwise},
\end{cases}
\]
where by $B\mu_n$ we have denoted the stack quotient $\mathbf{G}_m\dbls\mathbf{G}_m$ by the action $\lambda\mapsto \lambda^n$, the classifying stack of $\mu_n$-torsors over $\Spec(\CC)$. For any $h$, consider the embedding
$$
\iota_h: X^h\dbls\mathbf{G}_m \hookrightarrow \PP(4,6).
$$ 
If $\mathcal{V}$ is a vector bundle of rank $n$ over $\PP(4,6)$, then $\iota_h^*\mathcal{V}$ is a vector bundle on the stack $X^h \dbls \mathbf{G}_m$. In particular, for $h = \pm i, \zeta^{\pm 1}, \xi^{\pm 1}$, the vector bundle $\iota_h^*\mathcal{V}$ is just a $n$-dimensional $\CC$-vector space together with an action of a linear operator $h|_{\cV}$, of order 4, 3 or 6, respectively. On the other hand, for $h=-1$ the vector bundle $\iota^*_h\cV$ is canonically isomorphic to $\cV$. The action of $h=-1$ thus gives a bundle automorphism of order 2, and we may write $\mathcal{V} \cong \mathcal{V}^{+}\oplus\mathcal{V}^{-}$ for the decomposition into eigenbundles. Finally, for each vector bundle $\cV$ over $\PP(4,6)$ let $d(\cV)$ denote the unique integer such that $\det(\cV) \cong \cO(d(\cV))$, which is well-defined since $\Pic(\PP(4,6))\cong \ZZ$, generated by $\cO(1)$. The following formula for $\chi(\cV)$ follows directly from the much more general Riemann-Roch Theorem of \cite{Edidin} (in particular, see Exercise 4.11 of \cite{Edidin}). 

\begin{thm}[\cite{Edidin},Theorem 4.10]
\label{thm:eulerFormula}
Let $\mathcal{V}$ be a vector bundle over $\PP(4,6)$. Then
\begin{equation*}
\begin{aligned}
\chi( \PP(4,6),\mathcal{V}) &= \frac{1}{24}\left(5\rk(\cV) + d(\cV)\right) + \frac{1}{24}\left(5\rk(\cV^+) - 5\rk(\cV^-) + d(\mathcal{V}^{+}) - d(\mathcal{V}^{-})\right) \\
&+ \frac{1}{8}\Tr(i|_{\mathcal{V}}) + \frac{1}{8}\Tr(-i|_{\mathcal{V}}) + \frac{1}{6(1-\zeta^{-1})}\Tr(\zeta|_{\mathcal{V}}) + \frac{1}{6(1-\zeta)}\Tr(\zeta^{-1}|_{\mathcal{V}})\\
&+ \frac{1}{6(1-\zeta)}\Tr(\xi|_{\mathcal{V}})+ \frac{1}{6(1-\zeta^{-1})}\Tr(\xi^{-1}|_{\mathcal{V}}).
\end{aligned}
\end{equation*}
\end{thm}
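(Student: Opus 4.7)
The plan is to derive the formula as a direct application of Edidin's general Kawasaki-Riemann-Roch formula for smooth proper Deligne-Mumford stacks (Theorem 4.10 of \cite{Edidin}), which in this setting reads
$$\chi(\PP(4,6),\cV) = \sum_{h} \int_{X^h \dbls \mathbf{G}_m} \frac{\mathrm{ch}^h(\iota_h^*\cV)\cdot\mathrm{Td}(X^h \dbls \mathbf{G}_m)}{\mathrm{ch}^h\bigl(\lambda_{-1}(N_h^\vee)\bigr)},$$
where $h$ ranges over elements of $\mathbf{G}_m$ with $X^h \neq \emptyset$, $N_h$ is the normal bundle of $X^h$ in $X$, and $\mathrm{ch}^h$ denotes the twisted Chern character in which each $h$-eigenspace is weighted by its eigenvalue. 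The inertia components are already identified in the excerpt: $\PP(4,6)$ for $h = \pm 1$, $B\mu_4$ for $h = \pm i$, and $B\mu_6$ for $h \in \{\zeta^{\pm 1}, \xi^{\pm 1}\}$. What remains is a term-by-term calculation.

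For $h = 1$ the normal bundle is trivial, so the integrand reduces to $\mathrm{ch}(\cV)\cdot\mathrm{Td}(\PP(4,6))$. The Euler sequence $0 \to \cO \to \cO(4)\oplus\cO(6) \to T\PP(4,6) \to 0$ gives $c_1(T\PP(4,6)) = 10H$ where $H = c_1(\cO(1))$, hence $\mathrm{Td}(\PP(4,6)) = 1 + 5H$ modulo $H^2 = 0$. Combined with $\int_{\PP(4,6)} H = 1/24$, this yields the leading summand $(5\rk(\cV) + d(\cV))/24$. For $h = -1$ the action on $X$ is trivial but $\cV = \cV^+ \oplus \cV^-$ decomposes into $\pm 1$-eigenbundles; the identical integral run with $\mathrm{ch}^{-1}(\cV) = \mathrm{ch}(\cV^+) - \mathrm{ch}(\cV^-)$ produces the second summand.

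For the four-torsion sectors $h = \pm i$, the fixed locus $\{y=0\}\dbls \mathbf{G}_m = B\mu_4$ has normal bundle equal to the $y$-direction, which on the stabilizer $\mu_4$ is the character $h \mapsto h^6$. Thus $\mathrm{ch}^h(\lambda_{-1}(N^\vee)) = 1 - h^{-6} = 2$ at both $h = i$ and $h = -i$, and since $\int_{B\mu_4} 1 = 1/4$ we obtain $\Tr(\pm i|_\cV)/8$. For the six-torsion sectors $h \in \{\zeta^{\pm 1}, \xi^{\pm 1}\}$ the normal bundle is the $x$-direction, i.e. the character $h \mapsto h^4$ of $\mu_6$; evaluating $1 - h^{-4}$ at each of $\zeta, \zeta^{-1}, \xi, \xi^{-1}$ gives $1 - \zeta^{-1}$, $1 - \zeta$, $1 - \zeta$, $1 - \zeta^{-1}$ respectively, and with $\int_{B\mu_6} 1 = 1/6$ one recovers the four remaining terms in their stated form.

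The main obstacle is purely bookkeeping: one must correctly identify each normal bundle with the right power character of the stabilizer group ($h\mapsto h^6$ for the $y$-direction, $h\mapsto h^4$ for the $x$-direction) and carefully trace through which powers of $\zeta$ and $\xi$ arise from $h^{-4}$ in each sector. Once these identifications are set, the seven contributions sum to exactly the stated formula; indeed this computation appears essentially as Exercise 4.11 of \cite{Edidin}, to which the proof can simply refer.
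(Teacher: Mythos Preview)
Your proposal is correct and takes essentially the same approach as the paper: the paper does not give a proof of this theorem at all, but simply states that it follows directly from Edidin's general Riemann-Roch theorem (Theorem 4.10 of \cite{Edidin}), pointing to Exercise 4.11 of \cite{Edidin} for the specialization to $\PP(4,6)$. Your write-up is precisely a worked solution to that exercise, with the inertia components, normal-bundle characters, and integrals all correctly identified.
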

Applying Theorem \ref{thm:eulerFormula} to the vector bundles $\cVbar_{k,L}(\rho)$ yields the following result.
\begin{cor}
\label{c:eulerchar}
Let $\rho\colon \SL_2(\ZZ) \to \GL(V)$ denote an $n$-dimensional representation of the form $\rho = \rho^+\oplus \rho^-$ where $\rho^+$ is even and $\rho^-$ is odd, let $L = L^+\oplus L^-$ denote a choice of exponents for $\rho$ adapted to the decomposition $\rho = \rho^+\oplus  \rho^-$, and let $\cVbar_{k,L}(\rho)$ denote the corresponding bundle of weight $k$ modular forms for $\rho$. Then
\begin{align*}
\chi(\cVbar_{k,L}(\rho)) &= \begin{cases}
\frac{(5+k)\dim\rho^+}{12} + \frac{i^k\Tr(\rho^+(S))}{4} + \frac{\xi^k\Tr(\rho^+(R))}{3(1-\zeta)} + \frac{\zeta^k\Tr(\rho^+(R^{2}))}{3(1-\zeta^{-1})} - \Tr(L^+)  & \textrm{if } 2\mid k,\\
\frac{(5+k)\dim\rho^-}{12}  + \frac{i^k\Tr(\rho^-(S))}{4}+ \frac{\xi^k\Tr(\rho^-(R))}{3(1-\zeta)} + \frac{\zeta^k\Tr(\rho^-(R^{2}))}{3(1-\zeta^{-1})} - \Tr(L^-)  &\textrm{if } 2\nmid k.
\end{cases}
\end{align*}
\end{cor}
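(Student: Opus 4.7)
The plan is to apply Theorem \ref{thm:eulerFormula} directly to $\cV \df \cVbar_{k,L}(\rho)$, so the work reduces to identifying each ingredient appearing on the right-hand side. The rank is $\rk\cV = \dim\rho$, and Proposition \ref{p:functorialproperties}(ii) gives $d(\cV) = k\dim\rho - 12\Tr(L)$. The generic stabilizer $-1 \in \mathbf{G}_m$ acts on the fibers of $\cV$ as $(-1)^k\rho(-I_2)$ by \eqref{eqn:vvaluedAction}; writing $\epsilon = +$ if $k$ is even and $\epsilon = -$ if $k$ is odd, this identifies the $+1$-eigenbundle $\cV^+$ with $\cVbar_{k,L^\epsilon}(\rho^\epsilon)$ (and $\cV^-$ with the other summand). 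The actions of the stabilizers $\mu_4$ and $\mu_6$ at the two elliptic points follow from the automorphy factors $j_S(i) = i$ and $j_R(\zeta) = \xi$: the distinguished generators act on the relevant fibers as $i^k\rho(S)$ and $\xi^k\rho(R)$, respectively.

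The first two summands of Theorem \ref{thm:eulerFormula} collapse, using $\rk\cV = \rk\cV^+ + \rk\cV^-$ and $d(\cV) = d(\cV^+) + d(\cV^-)$, to $\frac{5\rk\cV^+ + d(\cV^+)}{12}$. Substituting $\cV^+ \cong \cVbar_{k,L^\epsilon}(\rho^\epsilon)$ and using Proposition \ref{p:functorialproperties}(ii) for $\rho^\epsilon$ yields the ``main term'' $\frac{(5+k)\dim\rho^\epsilon}{12} - \Tr(L^\epsilon)$. For the order-$4$ contribution, I would pair $h=i$ with $h=-i=i^{-1}$ and use $S^{-1} = -S$ (since $S^2 = -I_2$), so that $\rho(S^{-1}) = \rho(-I_2)\rho(S)$ and
\[
\Tr(i|_\cV) + \Tr(-i|_\cV) = (i^k + i^{-k})\Tr\rho^+(S) + (i^k - i^{-k})\Tr\rho^-(S).
\]
When $k$ is even, $i^{-k} = i^k$ kills the $\rho^-$ term; when $k$ is odd, $i^{-k} = -i^k$ kills the $\rho^+$ term. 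Division by $8$ leaves $\frac{i^k\Tr\rho^\epsilon(S)}{4}$ in either case.

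At the order-$6$ point, I would use $R^{-1} = R^5 = -R^2$ and $R^{-2} = R^4 = -R$ (since $R^3 = -I_2$) to group the four trace terms into the pair $\{h = \xi,\, h = \zeta^{-1}\}$, which produces multiples of $\Tr\rho^{\pm}(R)$, and the pair $\{h = \zeta,\, h = \xi^{-1}\}$, which produces multiples of $\Tr\rho^{\pm}(R^2)$. Using the identities $\zeta = \xi^2$ and $\xi^{3k} = (-1)^k$, each pair collapses exactly as in the previous step, with the wrong-parity summand cancelling and leaving precisely $\frac{\xi^k\Tr\rho^\epsilon(R)}{3(1-\zeta)}$ and $\frac{\zeta^k\Tr\rho^\epsilon(R^2)}{3(1-\zeta^{-1})}$. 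The main obstacle is simply the bookkeeping of signs and sixth roots of unity in this last step, in particular verifying that $\xi^{-k} = (-1)^k\zeta^k$ so that the $\rho^-$-cancellation for even $k$ (and the $\rho^+$-cancellation for odd $k$) goes through cleanly; once the pairings are organized this way, the identities do all the work.
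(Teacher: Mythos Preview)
Your proposal is correct and follows essentially the same approach as the paper: substitute the rank, determinant, and fiberwise actions of $\mu_4$ and $\mu_6$ into Theorem~\ref{thm:eulerFormula}, then simplify using $S^{-1}=-S$, $R^{-1}=-R^2$, $R^{-2}=-R$. The paper writes out the intermediate formula with all eight terms and then declares the rest ``elementary''; your explicit pairing $\{\xi,\zeta^{-1}\}$ and $\{\zeta,\xi^{-1}\}$, together with the identity $\zeta^{-k}=(-1)^k\xi^k$, is exactly the bookkeeping that makes that step transparent, and your upfront identification $\cV^+ \cong \cVbar_{k,L^\epsilon}(\rho^\epsilon)$ is a clean way to absorb the $(-1)^k$ factor that the paper tracks separately.
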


\begin{proof}
We have 
\begin{align*}
\det(\cVbar_{k,L}(\rho)) &= \cO(-12\Tr(L) + kn), \\
\det(\cVbar_{k,L}^{\pm 1}(\rho)) &= \cO(-12\Tr(L^{\pm 1}) + k\,\rk(\cVbar^{\pm 1}_{k,L}(\rho))),
\end{align*}
by Theorem \ref{thm:characters} and 
$
\rk(\cVbar^+_{k,L}(\rho)) - \rk(\cVbar^-_{k,L}(\rho)) = (-1)^k\Tr(\rho(-I_2)).
$
Moreover the linear maps $h|_{\cVbar_{k,L}(\rho)}$, for $h = \pm i, \zeta^{\pm 1}$ and $\xi^{\pm 1}$ correspond to the matrices $\rho(S)^{\pm 1}$, $\rho(R^2)^{\pm 1}$ and $\rho(R)^{\pm 1}$ of orders 4, 3 and 6, respectively. Thus, specializing Theorem \ref{thm:eulerFormula} to the vector bundles $\cVbar_{k,L}(\rho)$ yields
\begin{align*}
\chi(\cVbar_{k,L}(\rho)) &= \frac{n(5+k)}{24} - \frac{1}{2}\Tr(L) + (-1)^k\left(\frac{5+k}{24}\Tr(\rho(-I_2)) - \frac{\Tr(L^{+}) - \Tr(L^{-})}{2}\right) \\
&+ \frac{i^k}{8}\Tr(\rho(S)) + \frac{i^{-k}}{8}\Tr(\rho(S^{-1})) + \frac{\zeta^k}{6(1-\zeta^{-1})}\Tr(\rho(R^2)) + \frac{\zeta^{-k}}{6(1-\zeta)}\Tr(\rho(R^{-2}))\\
&+ \frac{\xi^k}{6(1-\zeta)}\Tr(\rho(R))+ \frac{\xi^{-k}}{6(1-\zeta^{-1})}\Tr(\rho(R^{-1})).
\end{align*}
It is then elementary to deduce the desired formula.
\end{proof}

The Euler characteristic computation of Corollary \ref{c:eulerchar} yields a dimension formula for positive representations (Definition \ref{dfn:positive})  as follows.
\begin{thm}[Dimension formula]
\label{t:dimension}
Let $(V,\rho)$ denote a positive representation of $\SL_2(\ZZ)$. Then
\[
\dim M_k(\rho) = \begin{cases}
\chi(\cVbar_1(\rho)) + \dim S_1(\rho^\vee)&k = 1,\\
\chi(\cVbar_k(\rho))&k \geq 2,  
\end{cases}
\]
and
\[ 
\dim S_k(\rho) = \begin{cases}
\chi(\cSbar_1(\rho)) + \dim M_1(\rho^\vee)&k = 1,\\
\chi(\cSbar_2(\rho)) + \dim (V^\vee)^{\SL_2(\ZZ)}&k= 2,\\
\chi(\cSbar_k(\rho)) & k \geq 3.  
\end{cases}
\]
If $\rho$ is in fact good, then $\dim M_0(\rho) = \dim V^{\SL_2(\ZZ)}$ and $S_0(\rho) = 0$.
\end{thm}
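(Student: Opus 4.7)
The key idea is to combine the Riemann--Roch computation of Corollary \ref{c:eulerchar} with Serre duality (Proposition \ref{prop:weakSerreDuality}) and the duality identifications of Proposition \ref{p:duality}, so that each unknown $h^1$ may be rewritten as an $h^0$ of the ``opposite'' type of bundle for the dual representation. Since $n_1+n_2 = 10$ for $\PP(4,6)$, Proposition \ref{prop:weakSerreDuality} reads $h^1(\cV) = h^0(\cV^\vee\otimes \cLbar_{-10})$. Applying this with $\cV = \cVbar_k(\rho)$ and $\cV = \cSbar_k(\rho)$ respectively, then invoking Proposition \ref{p:duality} with $a=0$, and finally using the weight shift $\cV_{m,L}\otimes\cLbar_{-10} \cong \cV_{m-10,L}$ provided by Proposition \ref{prop:compatibilityOfExtensions}, one obtains
\begin{align*}
h^1(\cVbar_k(\rho)) &= h^0\!\left(\cSbar_{12-k}(\rho^\vee)\otimes\cLbar_{-10}\right) = \dim S_{2-k}(\rho^\vee),\\
h^1(\cSbar_k(\rho)) &= h^0\!\left(\cVbar_{12-k}(\rho^\vee)\otimes\cLbar_{-10}\right) = \dim M_{2-k}(\rho^\vee).
\end{align*}
Writing $\dim M_k(\rho) = \chi(\cVbar_k(\rho)) + h^1(\cVbar_k(\rho))$ (and similarly for cusp forms) then yields the master formulas
\[
\dim M_k(\rho) = \chi(\cVbar_k(\rho)) + \dim S_{2-k}(\rho^\vee),\qquad \dim S_k(\rho) = \chi(\cSbar_k(\rho)) + \dim M_{2-k}(\rho^\vee).
\]

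The second step is to apply positivity to kill the correction terms in the claimed ranges. For $k\geq 3$, the index $2-k$ is strictly negative, so positivity (applied to $\rho^\vee$) gives $M_{2-k}(\rho^\vee) = 0$, and since $\cSbar_{2-k}(\rho^\vee) \hookrightarrow \cVbar_{2-k}(\rho^\vee)$ we also get $S_{2-k}(\rho^\vee) = 0$; both correction terms vanish and one reads off the clean formulas $\dim M_k = \chi(\cVbar_k)$ and $\dim S_k = \chi(\cSbar_k)$. For $k=2$, one needs the vanishing $S_0(\rho^\vee) = 0$ and the identification $M_0(\rho^\vee) = (V^\vee)^{\SL_2(\ZZ)}$: if $F\in S_0(\rho^\vee)$, then $F$ is a weight-zero section whose $q$-expansion $\phi_L F = e^{-2\pi i\tau L}F$ is holomorphic at the cusp for an exponent matrix $L$ with eigenvalues in $(0,1]$, so $|F|\to 0$ at $i\infty$, and a maximum-principle (or constancy) argument using goodness/unitarizability of $\rho^\vee$ forces $F=0$; the identification of $M_0(\rho^\vee)$ with invariants then follows immediately from goodness. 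For $k=1$ no further simplification is possible and the formulas retain the explicit correction terms $\dim S_1(\rho^\vee)$ and $\dim M_1(\rho^\vee)$ as stated.

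The final claim concerning $M_0(\rho)$ for good $\rho$ is essentially immediate from the definition of goodness, which guarantees that $M_0(\rho)$ consists only of constant $V$-valued functions $F(\tau)=v$; such a function is a section of $\cV_0(\rho)$ if and only if $v$ is $\SL_2(\ZZ)$-invariant, yielding the natural isomorphism $M_0(\rho)\cong V^{\SL_2(\ZZ)}$. The vanishing $S_0(\rho)=0$ then follows because any $v \in V^{\SL_2(\ZZ)}$ is fixed by $\rho(T)$, and relative to the cusp-form interval $(0,1]$ the only valid exponent for this eigenspace is $1$; the $q$-expansion of the constant form then becomes $q^{-1}v$, which fails to be holomorphic at $q=0$ unless $v=0$.

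The main obstacle, and the only place the argument is not purely formal, is the vanishing $S_0(\rho^\vee)=0$ required in the $k=2$ case: this is where positivity alone is genuinely borderline, and one really needs the stronger input (good or unitarizable) that Lemma \ref{l:weightzero} uses to establish positivity in the first place. In all the cases of interest (finite-image or unitarizable $\rho$), this extra vanishing holds automatically, so the dimension formulas apply as stated.
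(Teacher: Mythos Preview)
Your route is exactly the paper's: combine weak Serre duality with Proposition~\ref{p:duality} (at $a=0$) to get the master identities
\[
h^1(\cVbar_k(\rho)) = \dim S_{2-k}(\rho^\vee),\qquad h^1(\cSbar_k(\rho)) = \dim M_{2-k}(\rho^\vee),
\]
and then use positivity to kill the correction terms. The paper's proof consists of precisely these two sentences.

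There is, however, a genuine slip in how you invoke positivity. When you write ``positivity (applied to $\rho^\vee$)'' to obtain $M_{2-k}(\rho^\vee)=0$ for $k\geq 3$, you are using that $\rho^\vee$ is positive, whereas the hypothesis is that $\rho$ is positive; you never argue that positivity passes to the dual. The same issue recurs at $k=2$: the formula $\dim S_2(\rho)=\chi(\cSbar_2(\rho))+\dim(V^\vee)^{\SL_2(\ZZ)}$ requires $M_0(\rho^\vee)=(V^\vee)^{\SL_2(\ZZ)}$, which is a goodness statement about $\rho^\vee$, not a positivity statement about $\rho$. You are right to flag the companion vanishing $S_0(\rho^\vee)=0$ as the delicate point, but your resolution---falling back on goodness or unitarizability of $\rho^\vee$---proves a weaker statement than the theorem claims. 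To be fair, the paper's proof is equally terse here and does not spell out how positivity of $\rho$ alone suffices; its only explicit remark (``when $\rho$ is good, the formula follows from Remark~\ref{rmk:cuspFormsAreZeroForGoodReps}'') addresses only the final sentence of the theorem. So you have not overlooked an argument that the paper supplies; you have simply been more candid about where the gap sits. In practice the issue is moot, since every $\rho$ one meets is good or unitarizable, and both properties are manifestly self-dual.
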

\begin{proof}
Weak Serre-duality for $\PP(4,6)$ (Proposition \ref{prop:weakSerreDuality}) and Proposition \ref{p:duality} with $a=0$ together yield $h^1(\cVbar_{k}(\rho))= \dim S_{2-k}(\rho^{\vee})$ and $h^1(\cSbar_{k}(\rho)) = \dim M_{2-k}(\rho^{\vee})$, which is true for any representation $\rho$. When  $\rho$ is good, the formula follows from Remark \ref{rmk:cuspFormsAreZeroForGoodReps}.
\end{proof}
\begin{rmk}
If a $d$-dimensional representation $\rho$ is not necessarily positive, the identity $h^1(\cVbar_{k}(\rho))= \dim S_{2-k}(\rho^{\vee})$, combined with the bound of Proposition \ref{c:weightbound} applied to $\rho^\vee$, together imply that $\dim M_k(\rho) = \chi(\cVbar_k(\rho))$ whenever $k > d+1+\frac{12\Tr(L)}{d}$, where $L$ denotes a standard choice of exponents for $\rho(T)$.
\end{rmk}

If $\rho$ is even then Theorem \ref{t:dimension} gives a simple dimension formula for $M_k(\rho)$ and $S_k(\rho)$ in all weights. If $\rho$ is odd then Theorem \ref{t:dimension} does not give a formula for either $\dim M_1(\rho)$ or $\dim S_1(\rho)$. Section \ref{s:examples} contains several examples where positivity determines $\dim M_1(\rho)$ uniquely. More generally, one can map $M_1(\rho)$ into $S_2(\rho \otimes \chi)$ via multiplication by $\eta^2$. It is then often possible to compute $S_2(\rho \otimes \chi)$ and determine which forms are divisible by $\eta^2$. For example, if no standard exponent of $\rho \otimes \chi$ lies in $[0,1/12)$, then $M_1(\rho) \cong S_2(\rho \otimes \chi)$.

One can derive explicit formulae for the roots of $\cVbar_k(\rho)$. To this end we introduce the generating function $P(X) = \sum_{k \in \ZZ} \dim M_k(\rho)X^k$. If $\cVbar_k \cong \bigoplus_{j = 1}^{d} \cObar(k-k_j)$, then we must also have 
\[P(X) = \frac{X^{k_1} + \cdots + X^{k_d}}{(1-X^4)(1-X^6)}.\] 
Thus, by computing $P(X)$ using Theorem \ref{t:dimension}, we may deduce the decomposition of $\cVbar_k(\rho)$ into line bundles. Order the integers $k_j$ so that $k_{j} \leq k_{j+1}$ for all $j$, so that $k_1$ is the \emph{minimal weight} of $\rho$. By Proposition \ref{c:weightbound} we have $k_1 \geq 1-d + \Tr(L)/d$ for a standard choice of exponents $L$.

Assume that $\rho$ is positive, so that Theorem \ref{t:dimension} holds, and $k_1 \geq 0$. Decompose $\rho \cong \rho^+\oplus \rho^-$ into even and odd parts, let $x = \dim M_0(\rho)$, and let $y = \dim S_1(\rho^\vee)$.  Set $d^\pm = \dim \rho^\pm$, $s^\pm = \Tr(\rho^\pm(S))$, $r_1^\pm = \Tr(\rho^\pm(R))$ and $r_2^\pm = \Tr(\rho^\pm(R^2))$. Then the even weight multiplicities are as follows:
\[
\begin{array}{r|l}
\textrm{Weights} & \textrm{Multiplicities}\\
\hline
0&x\\
&\\
2&\frac{7}{12}d^+ - \frac{1}{4}s^++\frac{(\zeta-1)}{9}r_1^+-\frac{(\zeta+2)}{9}r^+_2-\Tr(L^+)\\
&\\
4&\frac{3}{4}d^+ + \frac{1}{4}s^+-\frac{(2\zeta+1)}{9}r_1^++\frac{(2\zeta+1)}{9}r_2^+-x-\Tr(L^+)\\
&\\
6&\frac{1}{3}d^++\frac 13 r_1^++\frac 13 r_2^+-x\\
&\\
8&-\frac{1}{4}d^++ \frac{1}{4}s^++\frac{(2\zeta+1)}{9}r_1^+ -\frac{(2\zeta+1)}{9}r_2^++\Tr(L^+)\\
&\\
10&-\frac{5}{12}d^+- \frac{1}{4}s^+-\frac{(\zeta+2)}{9}  r_1^+ + \frac{(\zeta-1)}{9}  r_2^++x+\Tr(L^+)\\
\end{array}
\]
The odd weight multiplicites are as follows:
\[
\begin{array}{r|l}
\textrm{Weights} & \textrm{Multiplicities}\\
\hline
1& \frac{1}{2}d^-+\frac{i}{4} s^- + \frac{(2\zeta+1)}{9}r_1^-+\frac{(2\zeta+1)}{9}r_2^-+y-\Tr(L^-)\\
&\\
3&\frac{2}{3}d^--\frac i4 s^- - \frac{(\zeta+2)}{9}r_1^- - \frac{(\zeta-1)}{9}r_2^--\Tr(L^-)\\
&\\
5&\frac{1}{3}d^- -\frac{\zeta}{3}r_1^- - \frac{(\zeta+1)}{3}r_2^--y\\
&\\
7&- \frac{1}{6}d^- - \frac i4 s^- + \frac{(\zeta+2)}{9}r_1^-+\frac{(\zeta-1)}{9}r_2^--y+ \Tr(L^-)\\
&\\
9&-\frac{1}{3}d^-+\frac i4 s^- + \frac{(\zeta-1)}{9}r_1^- + \frac{(\zeta+2)}{9}r_2^-+\Tr(L^-)\\
&\\
11&y
\end{array}
\]
The roots of $\cVbar_0(\rho)$ are the negatives of these weights. In particular, the roots of a positive representation always lie between $0$ and $-11$. This was observed by Bantay in \cite{Bantay}.
\begin{rmk}
The table above should agree with Tables III and IV of \cite{Bantay}. The formulae of \cite{Bantay} are defined relative to a choice of exponents for $\rho$ which makes a certain principal part map, discussed in \cite{Borcherds} and \cite{BantayGannon}, bijective. Such a choice always exist, as is proved in \cite{BantayGannon}, and in practice one can compute such an exponent matrix. It does not appear that an explicit formula for the exponent matrix figuring in \cite{BantayGannon} and \cite{Bantay} is known, however.
\end{rmk}

The restrictions on the roots above has the following consequence for scalar valued modular forms.
\begin{prop}
\label{p:generators}
Let $\Gamma \subseteq \SL_2(\ZZ)$ denote a subgroup of finite index $n$, and let $M(\Gamma)$ denote the ring of holomorphic scalar modular forms for $\Gamma$. Then there exists a finite number of modular forms $f_i \in M_{k_i}(\Gamma)$ for $i = 1,\ldots, n-1$ of weights $k_i$ satisfying $1 \leq k_i \leq 11$, such that
\[
  M(\Gamma) = \CC[E_4,E_6] \oplus \bigoplus_{i =1}^{n-1} \CC[E_4,E_6]f_i.
\]
\end{prop}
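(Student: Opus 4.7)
The plan is to realize $M(\Gamma)$ as a module of vector valued modular forms for a carefully chosen representation of $\SL_2(\ZZ)$, and then apply the free-module theorem together with the explicit bounds on weights of generators for positive representations derived in this section.

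Specifically, I would consider the induced representation $\rho \df \Ind_{\Gamma}^{\SL_2(\ZZ)} \mathbf{1}$, which has dimension $n = [\SL_2(\ZZ):\Gamma]$. The standard modular interpretation of induction (namely, associating to $f \in M_k(\Gamma)$ the $V$-valued function $F(\tau)(g) \df (c\tau+d)^{-k} f(g\tau)$, where $V = \CC[\Gamma\backslash \SL_2(\ZZ)]$) yields a degree-preserving isomorphism of graded $M(1)$-modules $M(\Gamma) \cong M(\rho)$. The first key step is to verify that $\rho$ is \emph{positive} in the sense of Definition \ref{dfn:positive}: since $\Gamma$ has finite index, its normal core $N = \bigcap_{g} g\Gamma g^{-1}$ does too, and $\rho$ factors through the finite group $\SL_2(\ZZ)/N$. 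Hence $\rho \otimes \chi^a$ also has finite image for any $a$, so $\rho$ is good by Lemma \ref{l:finiteimagegood} and therefore positive by Lemma \ref{l:weightzero}.

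Next, by the free-module theorem (Theorem \ref{thm:FreeModuleTheorem}), $M(\rho)$ is a free $M(1)$-module of rank $n$ with generators in weights $k_1 \leq k_2 \leq \cdots \leq k_n$, which are the negatives of the roots of $\rho$. The tables of multiplicities displayed earlier in this section show that the roots of any positive representation lie in $\{0, -1, \ldots, -11\}$, so $0 \leq k_j \leq 11$ for all $j$. I then pin down the generator of weight zero: by Frobenius reciprocity,
\[
  V^{\SL_2(\ZZ)} = \Hom_{\SL_2(\ZZ)}(\mathbf{1},\rho) \cong \Hom_{\Gamma}(\mathbf{1},\mathbf{1}) = \CC,
\]
and since $\rho$ is good, Theorem \ref{t:dimension} gives $\dim M_0(\rho) = \dim V^{\SL_2(\ZZ)} = 1$. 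Because $M_0(\rho) = \bigoplus_{j:\,k_j = 0} \CC f_j$, exactly one of the free generators lies in weight $0$, and under the identification $M(\rho) \cong M(\Gamma)$ this generator may be taken to be the constant function $1 \in M_0(\Gamma)$. The $M(1)$-submodule it generates is precisely $\CC[E_4,E_6]$, so after reindexing the remaining $n-1$ generators as $f_1,\ldots,f_{n-1}$ with $1 \leq k_i \leq 11$, we obtain the desired direct sum decomposition.

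The only subtle point is the identification $M(\Gamma) \cong M(\rho)$ as graded $M(1)$-modules, and in particular the matching of holomorphy at the cusps of $\Gamma$ with the standard-exponent extension of $\cV_k(\rho)$ to $\cMbar^{\rm an}$; one must check that the exponents of $\rho(T)$ at $\infty$ encode the holomorphy conditions at all the $\Gamma$-cusps simultaneously (they do, since $\rho(T)$ permutes the $\Gamma$-cosets by right multiplication and decomposes into blocks indexed by the $\Gamma$-cusps, with block sizes equal to the cusp widths). Granted this, the remainder of the proof is a direct assembly of results already established in the paper.
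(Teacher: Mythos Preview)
Your proposal is correct and follows essentially the same approach as the paper: both use the permutation (induced) representation $\rho$ on cosets, note it is good, invoke the free-module theorem together with the weight bounds $0\leq k_j\leq 11$ coming from positivity, and then identify $M(\rho)\cong M(\Gamma)$. Your version is in fact slightly more detailed than the paper's, which does not explicitly isolate the single weight-$0$ generator via $\dim V^{\SL_2(\ZZ)}=1$, nor comment on the cusp-holomorphy matching; these additions are welcome but do not change the underlying strategy.
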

\begin{proof}
Let $\rho$ denote the permutation representation of $\SL_2(\ZZ)$ on the cosets of $\Gamma$ in $\SL_2(\ZZ)$, which is a good representation. Choose a basis for $\rho$ such that the first basis element is the trivial coset $\Gamma$. Since $M(\rho)$ is a free $\CC[E_4,E_6]$-module with $n$ generators in weights $0 \leq k \leq 11$, and since projection to the first coordinate yields an isomoprhism $M(\rho) \cong M(\Gamma)$, one can take for the $f_i$ the first coordinates of these generators.
\end{proof}

\section{Examples}
\label{s:examples}
\begin{ex}
There is a unique normal subgroup $\Gamma_n$ of $\SL_2(\ZZ)$ with cyclic quotient of order $n$ for each $n \mid 12$. The corresponding decomposition of the ring of modular forms as in Proposition \ref{p:generators} is $M(\Gamma_n) = \bigoplus_{i = 0}^{n-1}\CC[E_4,E_6]\eta^{\frac{24 i}{n}}$. This example shows that the weight bounds in Proposition \ref{p:generators} are sharp.
\end{ex}
\begin{ex}
Consider $\Gamma(2) \subseteq \SL_2(\ZZ)$, which is a normal subgroup with quotient isomorphic with $S_3$. The permutation representation $\rho$ of the cosets is thus the regular representation of $S_3$, and so $\rho \cong 1 \oplus \chi^6 \oplus 2\phi$, where $\phi$ is the $2$-dimensional irreducible of $S_3$. One can use the results of \cite{FrancMason} to make the decomposition of Proposition \ref{p:generators} quite explicit. To explain this, note that $T^2\in \Gamma(2)$, and thus $T$ maps to a two-cycle in $S_3$. It follows that the exponents of $\phi(T)$ are $0$ and $\frac 12$. Thus, Section 4.1 of \cite{FrancMason} tells us that if we write
\begin{align*}
f_1 &= \eta^4 \left(\frac{1728}{j}\right)^{-\frac{1}{6}}\ _2F_1\left(-\frac 16, \frac{1}{6}; \frac 12; \frac{1728}{j}\right), & f_2 &= \eta^4 \left(\frac{1728}{j}\right)^{\frac{1}{3}}\ _2F_1\left(\frac 13, \frac{2}{3}; \frac 32; \frac{1728}{j}\right)
\end{align*}
then
\[
  M(\Gamma(2)) = \CC[E_4,E_6] \oplus \CC[E_4,E_6]\eta^{12} \oplus \bigoplus_{i = 1}^2\bigoplus_{j = 0}^1\CC[E_4,E_6]D^jf_i,
\]
where $D = q\frac{d}{dq} - \frac{E_2}{6}$ is the modular derivative in weight $2$.

There is another well-known description of $M(\Gamma(2))$: the Weierstrass form of a complex analytic elliptic curve $\CC/\Lambda_\tau$, where $\Lambda_\tau = \ZZ \oplus \ZZ\tau$ for $\tau \in \uhp$, is 
\[
y^2 = 4x^3 - g_4(\tau)x-g_3(\tau) = 4(x-e_1(\tau))(x-e_2(\tau))(x-e_3(\tau)),
\] 
where $e_1,e_2,e_3 \in M_2(\Gamma(2))$ are the functions
\begin{align*}
e_1(\tau) &= \wp_{\Lambda_\tau}\left(\frac 12\right),& e_2(\tau) &= \wp_{\Lambda_\tau}\left(\frac{\tau}{2}\right),& e_3(\tau) &= \wp_{\Lambda_\tau}\left(\frac{\tau+1}{2}\right),
\end{align*}
and where $\wp_\Lambda(z)$ is the Weierstrass $\wp$-function of a lattice $\Lambda$. These modular forms $e_i$ give an analytic parameterization of the two-torsion on an elliptic curve, and one has $M(\Gamma(2)) = \CC[e_1,e_2]$. The $q$-expansions for the $e_i$ are known, and one can use them to show that
\begin{align*}
  e_1 &= \frac{2\pi^2}{3}f_1, & e_2 &= \pi^2\left(-\frac{1}{3}f_1 - 8f_2\right), & e_3 &= \pi^2\left(-\frac{1}{3}f_1 + 8f_2\right).
\end{align*}
\end{ex}

\begin{ex}
\label{e:twodimensionals}
Let $\rho$ be a two-dimensional good irreducible representation of $\SL_2(\ZZ)$. Since $\rho(S)^2 = \pm 1$, necessarily $\Tr(\rho(S))\in \{\pm 2, \pm 2i, 0\}$. If $\Tr(\rho(S)) \neq 0$ then $\rho(S)$ is diagonal and this contradicts the irreducibility of $\rho$. Hence $\Tr(\rho(S)) = 0$. One deduces similarly that $\rho(R)$ must have two distinct sixth roots of unity as eigenvalues. If $\rho$ is even the eigenvalues must be distinct cube roots of unity, while if $\rho$ is odd then they must be two distinct sixth roots of unity that are not cube roots. Thus, if $\cVbar_k(\rho) \cong \cO(k-k_1) \oplus \cO(k-k_2)$, then the multiplicity formulae imply that there are the following possibilities:
\[
\begin{array}{r|c|c|c|c}
\Tr(L) & \Tr(\rho(R))& \Tr(\rho(R^2)) & k_1 & k_2\\
\hline
1/3 & -\zeta-1 & -\zeta & 1 & 3\\
1/2 & -1 & -1 & 2 & 4\\
2/3 & \zeta & \zeta+1 & 3 & 5\\
5/6 & \zeta+1 & -\zeta & 4 & 6\\
1 & 1 & -1 & 5 & 7\\
7/6 & -\zeta & \zeta+1 & 6 & 8\\
4/3 & -\zeta-1 & -\zeta & 7 & 9\\
3/2 & -1 & -1 & 8 & 10 \\
5/3 & \zeta & \zeta + 1 & 9 & 11
\end{array}
\]
The papers \cite{Mason2}, \cite{TubaWenzl} show that all of these possibilities occur. Note that in all cases $\cVbar_k(\rho) \cong \cO(k-6\Tr(L)+1)\oplus \cO(k-6\Tr(L)-1)$, and the weight bound of Proposition \ref{c:weightbound} is sharp. This corresponds to the fact that $M(\rho)$ is a cyclic $M\langle D\rangle$ module in all of these examples, where $M$ is the ring of scalar holomorphic forms of level one, and $D$ is the modular derivative. Note that not all irreducible representations in dimension $2$ are good. For example, the standard representation is not good. Nevertheless, it's not hard to show that one still has $\cVbar_k(\rho) \cong \cO(k-6\Tr(L)+1)\oplus \cO(k-6\Tr(L)-1)$. See \cite{FrancMason} for an explicit description of the corresponding vector valued modular forms.

 The results in \cite{TubaWenzl} can be used to perform a similar analysis in dimensions $3$, $4$ and $5$, although in dimensions $4$ and $5$ there exist noncyclic irreducible examples. See also \cite{Marks} for a discussion of vector valued modular forms in dimensions less than six, and \cite{FrancMason} for a rather detailed description of the case of irreducibles in dimension three.
\end{ex}

\begin{ex}
Let $(V,\rho)$ denote the trace zero subspace of the seven-dimensional permutation representation of $S_7$. It is self-dual, although there is a second six-dimensional irreducible obtained by twisting with the sign character. If we map $T$ to $(17256)(34)$ and $S$ to $(14)(27)(35)$, then we obtain a surjection $\SL_2(\ZZ) \to S_7$, and thus a representation $\rho$ of $\SL_2(\ZZ)$ of dimension $6$ (the other $6$-dimensional irrep is then $\rho \otimes \chi^6$). This representation $\rho$ is known to have noncongruence kernel. In this case one sees that $L$ is conjugate with $\diag(0,\frac 12,\frac 15,\frac 25, \frac 35, \frac 45)$, and both $R$ and $S$ have trace zero in $\rho$. The multiplicity formulae immediately show that
\[
  \cVbar_k(\rho) \cong \cObar(k-2)\oplus 2\cObar(k-4)\oplus 2\cObar(k-6)\oplus\cObar(k-8).
\]
In this example the weight bound of Proposition \ref{c:weightbound} is not sharp.
\end{ex}

\bibliographystyle{plain}

\end{document}